\newcommand{\bfn}{\boldsymbol n}
\newcommand{\bfI}{\boldsymbol I}
\newcommand{\bfP}{\boldsymbol P}
\newcommand{\bfx}{\boldsymbol x}
\newcommand{\bfa}{\boldsymbol a}
\newcommand{\bfp}{\boldsymbol p}
\newcommand{\bfv}{\boldsymbol v}
\newcommand{\bfu}{\boldsymbol u}
\newcommand{\bfw}{\boldsymbol w}
\newcommand{\bfB}{\boldsymbol B}
\newcommand{\bfg}{\boldsymbol g}
\newcommand{\bfe}{\boldsymbol e}
\newcommand{\bfkappa}{\boldsymbol \kappa}
\newcommand{\bfphi}{\boldsymbol \phi}
\newcommand{\bfpsi}{\boldsymbol{\psi}}
\newcommand{\bfzero}{\boldsymbol 0}
\newcommand{\bfV}{\boldsymbol V}
\newcommand{\mcK}{\mathcal{K}}
\newcommand{\mcE}{\mathcal{E}}
\newcommand{\tn}{|\mspace{-1mu}|\mspace{-1mu}|}
\newcommand{\Gammah}{{\Gamma_h}}
\newcommand{\nablas}{\nabla_\Gamma}
\newcommand{\nablash}{\nabla_{\Gamma_h}}
\newcommand{\divs}{\text{\rm{div}}_\Gamma}
\newcommand{\divsh}{\text{\rm{div}}_\Gammah}
\newcommand{\IR}{\mathbb{R}}
\newcommand{\bfPs}{\boldsymbol{P}_\Gamma}
\newcommand{\bfPsh}{\boldsymbol{P}_\Gammah}
\newcommand{\bfQs}{\boldsymbol{Q}_\Gamma}
\numberwithin{equation}{section}
\newtheorem{lem}{Lemma}[section]
\newtheorem{thm}{Theorem}[section]
\newtheorem{rem}{Remark}[section]
\newtheorem{cor}{Corollary}[section]
\newenvironment{proof}{\noindent \newline {\bf Proof.}}
{\hfill \mbox{\fbox{} } \newline}
\begin{document}
\title{\bf A Stabilized Finite Element Method for the 
Darcy Problem on Surfaces
\thanks{This research was supported in part by the Swedish Foundation for Strategic Research Grant No.\ AM13-0029, the Swedish Research Council Grants No.\ 2011-4992 and No.\ 2013-4708, and Swedish strategic research programme eSSENCE.}
}
\author{
Peter Hansbo\footnote{Department of Mechanical Engineering, J\"onk\"oping University, SE--551~11   J\"onk\"oping, Sweden, Peter.Hansbo@jth.hj.se } \mbox{ }
{Mats~G.~Larson} \footnote{Department of Mathematics and Mathematical Statistics, Ume{\aa} University, SE--901~87~~Ume{\aa}, Sweden, mats.larson@math.umu.se}
}
\maketitle

\begin{abstract} We consider a stabilized finite element method 
for the Darcy problem on a surface based on the Masud-Hughes formulation. A special feature of the method is that the 
tangential condition of the velocity field is weakly enforced 
through the bilinear form and that standard parametric continuous polynomial spaces on triangulations can be used. We prove 
optimal order a priori estimates that take the approximation 
of the geometry and the solution into account.
\end{abstract}

\section{Introduction}

In this note we develop a stabilized finite element method 
for Darcy flow on triangulations of a smooth surface. Starting 
from the Masud-Hughes formulation \cite{MaHu02} we obtain a 
very convenient method based on a coercive bilinear form that 
can handle different approximation spaces. More precisely, 
we consider parametric continuous piecewise polynomial 
elements, with possibly different orders in the mapping 
as well as the spaces for the velocity and pressure. A special 
feature of our approach is that we avoid using vector elements 
and discretize the tangent velocity vector componentwise in 
$\IR^3$ together with a weak enforcement of the tangent 
condition. Our approach is in contrast with the recent report 
\cite{FeFoFu14} where a method for Darcy flow based on 
Raviart-Thomas spaces was presented.  

We derive error estimates that takes the approximation of 
the geometry and the solution into account and separates the 
dependency of the different orders of approximations. The 
error in the velocity is defined using standard componentwise 
liftings based on the closest point mapping and we show an energy 
estimate, an $L^2$ estimate for the pressure, and an
$L^2$ estimate for the tangential part of the velocity which is 
slightly sharper with respect to the geometry approximation 
compared to the bound for the full velocity vector provided by 
the energy norm estimate. We also provide numerical results 
confirming our theoretical investigations.

Recently there has been an increasing activity in research on 
finite elements on surfaces, in particular, we mention the 
following references that are relevant \textcolor{black}{to this} work: finite 
element methods for membrane shell problem based on tangential calculus \cite{HaLa14} (linear) and \cite{HaLaLa14} (nonlinear), 
higher order methods for the Laplace-Beltrami operator \cite{De09}, continuous-discontinuous Galerkin methods for the biharmonic problem \cite{LaLa14}, and the \textcolor{black}{seminal} paper \cite{Dz88} where finite elements for the Laplace-Beltrami was first developed. For 
general background on finite elements for PDEs on surfaces we 
refer to the recent review article \cite{DzEl13} and the references therein.

The outline of the reminder of the paper is as follows: 
In Section 2 we precent the Darcy problem on a surface and 
the necessary background on tangential calculus. In 
Section 3 we define the triangulations and their approximation 
properties, the finite element spaces, the interpolation theory, 
and finally the finite element method. In Section 4 we collect 
necessary results on lifting and extension of functions between 
the exact and discrete surfaces. In Section 5 we derive a priori 
error estimates starting with a Strang \textcolor{black}{lemma} and then estimates 
of the quadrature errors in the forms resulting approximation of 
the geometry, which together with the interpolation results 
yields the final estimate. Estimates for the presssure and tangential
part of the velocity are derived using duality techniques. Finally, 
in Section 6 we present numerical examples.

\section{The Darcy Problem on a Surface}

\subsection{The Surface} Let $\Gamma$ be a closed smooth 
surface embedded in $\IR^3$ with signed distance function $\rho$, exterior unit normal $\bfn = \nabla \rho$, and closest 
point mapping $\bfp:\IR^3 \rightarrow \Gamma$. Then there 
is a $\delta_0>$ such that $\bfp$ maps each point in 
$U_{\delta_0}(\Gamma)$ to precisely one point on $\Gamma$, 
where $U_\delta(\Gamma) = \{ \bfx \in \IR^3 : \rho(\bfx) < \delta\}$ is an open tubular neighborhood of $\Gamma$. 

\subsection{Tangential Calculus} For each function $u$ defined 
on $\Gamma$ we let the extension $u^e$ to the neighborhood 
$U_{\delta_0}(\Gamma)$ be defined by the pull back 
$u^e = u \circ \bfp$. For a function $u: \Gamma \rightarrow \IR$ 
we then define the tangential gradient
\begin{equation}
\textcolor{black}{\nablas u = \bfPs \nabla u^e}
\end{equation}
where $\bfPs = \bfI - \bfn \otimes \bfn$ is the projection 
onto the tangent plane $T_{\bfx}(\Gamma)$. The surface 
divergence of a vector field $\bfu:\Gamma \rightarrow \IR^3$ 
is defined by
\begin{equation}
\divs(\bfu) = \text{tr}(\bfu \otimes \nablas ) 
= \text{div}(\bfu) - \bfn \cdot (\bfu \otimes \nabla)\cdot \bfn 
\end{equation}
Decomposing $\bfu$ into a tangent and normal component
\begin{equation}\label{eq:normaltangentsplit}
\bfu = \bfu_t + u_n \bfn
\end{equation}
we have the identity 
\begin{equation}\label{eq:divnormal}
\divs \bfu = \divs \bfu_t + u_n H
\end{equation}
where 
\begin{equation}
H = \text{tr}(\bfkappa_{\Gamma})
\end{equation}
is twice the mean curvature of the surface and 
$\bfkappa_\Gamma = \bfkappa|_\Gamma$, with 
$\bfkappa = \nabla \otimes \nabla \rho$, is 
the curvature tensor of $\Gamma$. Using Green's 
formula we have 
\begin{equation}\label{eq:Greensdiv}
(\divs \bfv_t, q)_\Gamma = -(\bfv_t, \nablas q)_\Gamma
\end{equation}
for tangential vector fields $\bfv_t$.

\subsection{The Surface Darcy Problem}

\paragraph{Tangential Vector Field Formulation.}
The Darcy problem takes the form: find a tangential vector 
field $\bfu_t:\Gamma \rightarrow T(\Gamma)$ 
representing velocity and the pressure $p:\Gamma\rightarrow \IR$ 
such that
\begin{alignat}{2}\label{eq:Darcya}
\divs \bfu_t &= f \qquad &\text{on $\Gamma$}
\\ \label{eq:Darcyb}
\bfu_t + \nablas p &= \bfg \qquad &\text{on $\Gamma$}
\end{alignat}  
where $f:\Gamma \rightarrow \IR$ is a given function 
such that $\int_\Gamma f = 0$ and \textcolor{black}{$\bfg:\Gamma \rightarrow \IR^3$} 
is a given tangential vector field. The corresponding weak 
form reads: find $(\bfu_t,p) \in \bfV_t \times Q$ such 
that
\begin{equation}\label{eq:darcytan}
a_t((\bfu_t,p),(\bfv_t,q)) = l(q)\qquad 
\forall (\bfv_t,q) \in \bfV_t \times Q
\end{equation}
where 
\begin{align}
a_t((\bfu_t,p),(\bfv_t,q)) &= (\bfu_t,\bfv_t)_\Gamma 
+ (\nablas p, \bfv_t)_\Gamma 
- ( \bfu_t,\nablas q)_\Gamma
\\
l(q) &= (f,q)_\Gamma + (\bfg,\bfv)_\Gamma
\end{align}
\textcolor{black}{with} 
$\bfV_t = \{ \bfv:\Gamma \rightarrow \IR^3 : 
\bfv \in [L^2(\Gamma)]^3, \bfn \cdot \bfv = 0 \}$ 
and $Q = \{ q \in H^1(\Gamma) : \smallint_\Gamma q = 0\}$. 

Since $\Gamma$ is smooth and $p \in Q$ is the solution to the 
elliptic problem $\divs(\nablas p) = \divs \bfu_t - \divs \bfg 
= f  - \divs \bfg$, we have the elliptic regularity estimate 
\begin{equation}
\| p \|_{H^{s+2}(\Gamma)} \lesssim 
\| f - \divs \bfg \|_{H^{s}(\Gamma)} 
\lesssim 
\| f \|_{H^s(\Gamma)}   +  \| \bfg \|_{H^{s+1}(\Gamma)} 
\end{equation}
which combined with 
 $\|\bfu_t\|_{H^{s+1}(\Gamma)}  
 =  \|\bfg - \nablas p\|_{H^{s+1}(\Gamma)}
 \leq  \|\bfg\|_{H^{s+1}(\Gamma)} 
 + \|p\|_{H^{s+2}(\Gamma)}$ gives 
\begin{equation}\label{eq:ellreg}
\| \bfu_t \|_{H^{s+1}(\Gamma)} 
+
\| p \|_{H^{s+2}(\Gamma)} 
\lesssim \|f \|_{H^{s}(\Gamma)}
+
\| \bfg \|_{H^{s+1}(\Gamma)}
\end{equation}

\paragraph{General Vector Field Formulation.}

Letting $\bfu$ be a \textcolor{black}{vector field} with a nonzero normal 
component and recalling the split $\bfu=\bfu_t + u_n \bfn$, see equation (\ref{eq:normaltangentsplit}), we get the problem: find 
$(\bfu,p) \in \bfV \times Q$ such that
\begin{equation}\label{eq:darcygen}
a((\bfu,p),(\bfv,q)) = l(q)\qquad 
\forall (\bfv,q) \in \bfV \times Q
\end{equation}
where
\begin{align}
a((\bfu,p),(\bfv,q))&=(\bfu_t,\bfv_t)_\Gamma + (u_n,v_n)_\Gamma
+(\nablas p,\bfv_t)_{\Gamma} 
-(\bfu_t, \nablas q)_\Gamma
\\
&=(\bfu,\bfv)_\Gamma
-( p,\divs \bfv_t)_{\Gamma} 
+(\divs \bfu_t ,q)_\Gamma
\\
&=(\bfu,\bfv)_\Gamma
-( p,\divs \bfv)_{\Gamma} + (p,H v_n)_\Gamma
+(\divs \bfu ,q)_\Gamma - (u_n H, q)_\Gamma 
\end{align}
and 
$\bfV = \{ \bfv:\Gamma \rightarrow \IR^3 : 
\bfv \in [L^2(\Gamma)]^3\}$ is the space of 
general $L^2(\Gamma)$ vector fields. 

We note that $(\nablas p,\bfv)_{\Gamma}
= (\nablas p,\bfv_t)_{\Gamma}$ , since $\nablas p$ is tangential, 
and therefore we get \textcolor{black}{a} weak enforcement of the tangential condition 
$u_n = \bfu \cdot \bfn = 0$ by setting $q=0$ and 
$\bfv=v_n \bfn$. Testing instead with a tangential vector 
field $\bfv_t\in \bfV_t \subset \bfV$ and $q\in Q$ we recover the Darcy problem (\ref{eq:darcytan}).

\begin{rem} We note that we have the identity
\begin{align}
a((\bfu,p),(\bfv,q))&=(\bfu,\bfv)_\Gamma
+(\nablas p,\bfv_t)_{\Gamma} 
-(\bfu_t, \nablas q)_\Gamma
\\ \label{eq:divform-a}
&=(\bfu,\bfv)_\Gamma
-( p,\divs \bfv_t)_{\Gamma} 
+(\divs \bfu_t ,q)_\Gamma
\\ \label{eq:divform-b}
&=(\bfu,\bfv)_\Gamma
-( p,\divs \bfv)_{\Gamma} + (p,H v_n)_\Gamma
+(\divs \bfu ,q)_\Gamma - (u_n H, q)_\Gamma 
\end{align}
where we used the identity (\ref{eq:divnormal}) for the surface divergence of a 
general vector field in the last step. We note that the third form (\ref{eq:divform-b}) involves quantities that are directly computable while the second form (\ref{eq:divform-a}) involves the \textcolor{black}{surface} divergence of the tangent 
component $\divs \bfv_t$, which is more complicated to compute. When constructing 
a numerical method based on the divergence form (\ref{eq:divform-b}) 
the term  $(v_n, H q)_\Gamma$ either has to be included, which involves 
computation of $H$, or alternatively a stronger penalty on the normal 
component $v_n$ must be added in order to control the inconsistency 
resulting from neglecting the term. Neither alternative is attractive.
%
%
%
%
\end{rem}

\subsection{Masud-Hughes Stabilized Weak Formulation}

The Masud-Hughes weak formulation, 
originally proposed in \cite{MaHu02} for \textcolor{black}{planar} domains, 
for the surface Darcy problem with a general vector field 
velocity takes the form: find $(\bfu,p) \in \bfV \times Q$ 
such that
\begin{equation}\label{eq:mhweak}
A((\bfu,p),(\bfv,q)) = L((\bfv,q)) \qquad 
\forall (\bfv,q) \in \bfV \times Q
\end{equation}
where
\begin{align}\label{eq:mhformcont}
A((\bfu,p),(\bfv,q))
&=(\bfu,\bfv)_\Gamma 
\\ \nonumber 
&\qquad
+(\nablas p,\bfv)_{\Gamma}  
-(\bfu, \nablas q)_\Gamma 
\\ \nonumber 
&\qquad +\frac{1}{2}(\bfu + \nablas p,-\bfv + \nablas q)_\Gamma
\\ \label{eq:mhrighthandsidecont}
L((\bfv,q)) &= (f^e,q)_\Gamma + (\bfg^e,\bfv)_\Gamma
+\frac{1}{2}(\bfg^e,-\bfv + \nablas q)_\Gamma
\end{align}
Expanding the forms we obtain
\begin{align}
A((\bfu,p),(\bfv,q))  
&= 
\frac{1}{2}(\bfu,\bfv)_\Gamma 
+ \frac{1}{2}(\nablas p,\nablas q)_\Gamma
+\frac{1}{2}(\nablas p,\bfv)_{\Gamma}  
-\frac{1}{2}(\bfu, \nablas q)_\Gamma 
\\
L((\bfv,q)) 
&= 
(f^e,q)_\Gamma  
+ \frac{1}{2}(\bfg^e,\bfv + \nablas q)_\Gamma
\end{align}
and thus $A$ has consists of a symmetric and a skew symmetric part.



\section{The Finite Element Method}

\subsection{Triangulation of the Surface}

\paragraph{Parametric Triangulated Surfaces.} 
Let $\widehat{K}\subset \IR^2$ be a reference triangle and 
let $P_{k_g}(\widehat{K})$ be the space of polynomials of 
order less or equal to $k_g$ defined on $\widehat{K}$. Let 
$\Gamma_{h,k_g}$ be a triangulated surface with quasi uniform triangulation $\mcK_{h,k_g}$ and mesh parameter $h\in (0,h_0]$ 
such that each triangle $K=F_{K,k_g}(\widehat{K})$ where 
$F_{K,k_g} \in [P_{k_g}(\widehat{K})]^3$. Let $\bfn_h$ be the elementwise defined normal to $\Gamma_h$. We let 
$\mcE_{h,k_g}$ denote the set of edges in the triangulation. 
For simplicity we use the notation $\mcK_h = \mcK_{h,k_g}$, 
$\mcE_h = \mcE_{h,k_g}$, and $\Gamma_h = \Gamma_{h,k_g}$ 
when appropriate.

\paragraph{Geometry Approximation Property.} We assume that the family 
$\{\Gamma_{h,k_g}, h \in (0,h_0]\}$ approximates 
$\Gamma$ in the following way
\begin{itemize}
\item $\Gamma_{h,k_g} \subset U_{\delta_0}(\Gamma)$ and 
$\bfp:\Gamma_{h,k_g} \rightarrow \Gamma$ is a bijection.
\item The following estimates hold
\begin{equation}\label{eq:geombounds}
\| \bfp \|_{L^\infty(\Gamma_{h,k_g})}\lesssim h^{k_g+1}, 
\qquad
\| \bfn\circ \bfp - \bfn_h \|_{L^\infty(\Gamma_{h,k_g})}\lesssim h^{k_g}
\end{equation}
\end{itemize}
\textcolor{black}{These properties are valid, e.g., if $F_{K,k_g}$ is constructed using Lagrange interpolation of the surface.}
\subsection{Parametric Finite Element Spaces}

Let
\begin{equation}\label{spaceVh}
V_{h,k,k_g} = \{ \bfv: {\bfv\vert_K \circ F_{K,k_g}\in P_{k}(\hat K),\; \forall K\in\mcK_{h,k_g}};\; \bfv\; \in [C^0(\Gamma_h)]^3\}
\end{equation}
be the space of parametric continuous piecewise polynomials 
of order $k$ mapped with a mapping of order $k_g$. We let 
\begin{equation}
\bfV_h = [V_{h,k_u,k_g}]^3, \qquad 
Q_h = \{ q \in V_{h,k_p,k_g}: \smallint_{\Gamma_{h,k_g}} q = 0 \}
\end{equation}
be the finite element spaces for velocity and pressure, 
consisting of continuous piecewise polynomials of order $k_u$ 
and $k_p$, respectively, with parametric map of order $k_g$ (which 
is the same for both spaces). 
%

\subsection{Interpolation}
Let 
\begin{equation}
\pi_{h,1}:L^2(\mcK_{h,1})\ni v \mapsto \textcolor{black}{\pi_{h,1}} v\in  V_{h,k,1}
\end{equation}
be a Scott-Zhang type interpolant. Then, for each element 
$K\in \mcK_{h,1}$ we have the following 
elementwise estimate
\begin{equation}\label{eq:interpollinear}
\| v^e - \textcolor{black}{\pi_{h,1}} v^e \|_{H^m(K)} \lesssim h^{s-m} \| v^e \|_{H^s(N(K))} 
\lesssim \| v \|_{H^s(N^l(K))}
,
\quad m\leq s \leq k+1, \quad m=0,1
\end{equation}
where $N(K)$ is the union of the neighboring elements to 
element $K$ and $N^l(K) = (N(K))^l$. In (\ref{eq:interpollinear}) 
the first inequality 
follows from interpolation theory, see \cite{BrSc08}, and the 
second from the chain rule in combination with $L^\infty$ 
boundedness of derivatives of the closest point map $\bfp$ 
in the tubular neighborhood $U_{\delta_0}(\Gamma)$ which 
follows from smoothness of $\Gamma$.

Next we define the interpolant $\pi_{h,k_g}:L^2(\mcK_h) \rightarrow V_{h,1,k_g}$ as follows
\begin{equation}
\pi_{h,k_g} v^e|_{K} = (\pi_{h,1} v^e) \circ G_{K,k_g,1} 
\end{equation}
where $G_{K,k_g,1} = F_{K,1} \circ F_{K,k_g}^{-1}: K_{k_g} \rightarrow K_1$ is a bijection from the curved triangle $K_{k_g}$ 
to the corresponding flat triangle $K_1$. Using uniform $L^\infty$ bounds on $G_{K,k_g,1}$ and its first order derivative we have the estimates
\begin{align}
\| v^e - \pi_{h,k_g}v^e \|_{H^m(K_{k_g})} 
&\lesssim 
\| v^e - \pi_{h,1} v^e \|_{H^m(K_{1})}
\\
&\quad \lesssim 
h^{s-m} \| v^e \|_{H^s(N(K_1))} 
\lesssim 
h^{s-m} \| v \|_{H^s(N^l(K_1))}
\end{align}
and thus we conclude that we have the estimate
\begin{equation}\label{eq:interpol}
\| v^e - \pi_{h,k_g}v^e \|_{H^m(K_{k_g})} 
\lesssim h^{s-m} \| v \|_{H^s(N^l(K_1))},
\qquad m\leq s \leq k+1, \quad m=0,1
\end{equation}
for all $K\in \mcK_{h,k_g}$. We also have the stability estimate
\begin{equation}\label{eq:interpolstab}
\|  \pi_{h,k_g}v^e \|_{H^m(K_{k_g})} 
\lesssim  \| v \|_{H^m(N^l(K_1))},
\qquad m=0,1
\end{equation}
When appropriate we simplify the notation and write $\pi_h =\pi_{h,k_g}$.

Finally, we have the \textcolor{black}{super--approximation} result
\begin{equation}\label{eq:superapproximation}
\|(I-\pi_h)(\chi^e v)\|_{\Gamma_h} \lesssim 
h \|\chi \|_{W^{k_g+1}_\infty(\Gamma)} \| v \|_{\Gamma_h} 
\end{equation}
for $\chi \in W^{k_g+1}_\infty(\Gamma)$ and $v \in V_{h,k_g}$.


\subsection{Masud-Hughes Stabilized Finite Element Method}

The finite element method based on the 
Masud-Hughes weak formulation (\ref{eq:mhweak}) 
for the surface Darcy problem takes the form:    
find $(\bfu_h,p_h) \in \bfV_h \times Q_h$ such 
that
\begin{equation}\label{eq:fem}
A_h((\bfu_h,p_h),(\bfv,q)) = L_h((\bfv,q))\qquad 
\forall (\bfv,q) \in \bfV_h \times Q_h
\end{equation}
where
\begin{align}\label{eq:mhform}
A_{h}((\bfu,p),(\bfv,q))
&=(\bfu,\bfv)_\Gammah 
\\ \nonumber 
&\qquad
+(\nablash p,\bfv)_{\Gammah}  
-(\bfu, \nablash q)_\Gammah 
\\ \nonumber 
&\qquad +\frac{1}{2}(\bfu + \nablash p,-\bfv + \nablash q)_\Gammah
\\
&=
\frac{1}{2}(\bfu,\bfv)_\Gammah 
+\frac{1}{2}(\nablash p, \nablash q)_\Gammah
+\frac{1}{2}(\nablash p,\bfv)_{\Gammah}  
-\frac{1}{2}(\bfu, \nablash q)_\Gammah 
\\ \label{eq:mhrighthandside}
L_h((\bfv,q)) &= (f^e,q)_\Gammah + (\bfg^e,\bfv)_\Gammah
+\frac{1}{2}(\bfg^e,-\bfv + \nablash q)_\Gammah
\\
&= (f^e,q)_\Gammah +\frac{1}{2}(\bfg^e,\bfv + \nablash q)_\Gammah
\end{align}
 
\begin{rem} \label{remark-cn} We could add the term $c_N (\bfn_h \cdot \bfu,\bfn_h \cdot \bfv)_{\Gammah}$, where $c_N \geq 0$ is a \textcolor{black}{parameter, to} 
enforce the normal constraint more strongly. We will, however, see 
that we can take $c_N=0$, and no significant advantages of taking 
$c_N>0$ has been observed in our numerical experiments.
\end{rem}

\section{Preliminary Results}

\subsection{Extension and Lifting of Functions}

In this section we summarize basic results concerning 
extension and liftings of functions. We refer to 
\cite{BuHaLa14} and \cite{De09} for further details.

\paragraph{Extension.}
Recalling the definition $v^e = v\circ \bfp$ of the 
extension and using the chain rule we obtain the identity 
\begin{equation}\label{eq:tanderext}
\nablash v^e = \bfB^T_t \nablas v  
\end{equation}
where 
\begin{equation}\label{Bmap}
\bfB_t = \bfP_\Gamma (\bfI - \rho \bfkappa)  \bfP_\Gammah : T_{\bfx}(K)\rightarrow
T_{\bfp(\bfx)} (\Gamma)
\end{equation}
and we recall that $\bfkappa = \nabla \otimes \nabla \rho$ which 
may be expressed in the form
\begin{equation}\label{Hform}
\bfkappa(\bfx) = \sum_{i=1}^2 \frac{\kappa_i^e}{1 + \rho(\bfx)\kappa_i^e}\bfa_i^e \otimes \bfa_i^e
\end{equation}
where $\kappa_i$ are the principal curvatures with corresponding
orthonormal principal curvature vectors $\bfa_i$, see \cite{GiTr01} Lemma 14.7. We note that there is $\delta>0$ such that the uniform bound 
\begin{equation}\label{kappalinf}
\textcolor{black}{\|\bfkappa \|_{L^\infty(U_\delta(\Gamma))}\lesssim 1}
\end{equation}
holds.
Furthermore, we have the inverse mapping 
\begin{equation}
\bfB_t^{-1}= \bfP_\Gammah(\bfI-\rho\bfkappa)^{-1} \bfP_\Gamma : T_{\bfp(\bfx)}(\Gamma)\rightarrow T_{\bfx} (K)
\end{equation}

We extend $\bfB_t$ to $T_{\bfx}(K) \oplus N_{\bfx}(K)$, where $N_{\bfx}(K)$ is 
the vector space of vector fields that are normal to $K$ at 
$\bfx \in K$, by defining
\begin{equation}
\bfB = (\bfPs \bfB_t \bfPsh + \bfn \otimes \bfn_h) 
\end{equation}
with inverse 
\begin{equation}
\bfB^{-1} = (\bfPsh \bfB_t^{-1} \bfPs + \bfn_h \otimes \bfn) 
\end{equation}
We note that $\bfB$ and $\bfB^{-1}$ preserves the tangent 
and normal spaces as follows
\begin{equation}\label{BMapPreservesspaces}
\bfB T_{\bfx}(K) = T_{\bfp(\bfx)}(\Gamma), \qquad 
\bfB N_{\bfx}(K) = N_{\bfp(\bfx)}(\Gamma)
\end{equation}
and
\begin{equation}\label{BMapInversePreservesspaces}
\bfB^{-1} T_{\bfp(\bfx)}(\Gamma)=T_{\bfx}(K), \qquad
\bfB^{-1} N_{\bfp(\bfx)}(\Gamma)=N_{\bfx}(K)
\end{equation}
%

For clarity, we will employ the \textcolor{black}{notation}
$\bfB(\bfp(\bfx)) = \bfB(\bfx)$ for each $\bfx \in K$, 
$K \in \mcK_h$, so that we do not have to indicate lift 
or extensions of the operator $\bfB$.

\paragraph{Lifting.}
The lifting $w^l$ of a function $w$ defined on $\Gamma_h$ 
to $\Gamma$ is defined as the push forward
\begin{equation}
(w^l)^e = w^l \circ \textcolor{black}{\bfp} = w \quad \text{on $\Gamma_h$}
\end{equation}
and we have the identity 
\begin{equation}\label{eq:tanderlift}
\nablas w^l = \bfB^{-T} (\nablash w)^l
\end{equation}

\subsection{Estimates Related to $\boldsymbol{B}$}
\label{sec:Bbounds}
Using the uniform bound  \textcolor{black}{(\ref{kappalinf})} 
it follows that
\begin{equation}\label{BBinvbound}
  \| \bfB \|_{L^\infty(\Gamma_h)} \lesssim 1,
 \qquad \| \bfB^{-1} \|_{L^\infty(\Gamma)} \lesssim 1
\end{equation}
Furthermore, we have the estimates 
\begin{equation}\label{BBTbound}
\|\bfB - \bfI\|_{L^\infty(\Gamma_h)} 
\lesssim h^{k_g} 
 \qquad \| \bfI - \bfB^T \bfB \|_{L^\infty(\Gamma_h)} 
 \lesssim h^{k_g+1}
\end{equation}
To prove the first estimate in (\ref{BBTbound}) we note, 
using the definition (\ref{Bmap}) of $\bfB$ and the bound (\ref{eq:geombounds}) on $\rho$ that 
\begin{equation}\label{Bsimple}
\bfB = \bfPs \bfPsh + \bfn \otimes \bfn_h + O(h^{k_g+1})
\end{equation}
Next writing $\bfI = \bfPs + \bfn \otimes \bfn = 
\bfPs^2 + \bfn \otimes \bfn$, where we used that 
$\bfPs$ is a projection, we obtain
\begin{equation}\label{eq:BminusI}
\bfB - \bfI = \bfPs(\bfPsh - \bfPs) 
+ \bfn \otimes (\bfn_h - \bfn) + O(h^{k_g+1})
\end{equation}
and thus the estimate follows using (\ref{eq:geombounds}) 
since $\| \bfPs - \bfPsh \|_{L^\infty(\Gammah)} 
\lesssim \| \bfn - \bfn_h \|_{L^\infty(\Gammah)} 
\lesssim h^{k_g}$.

For the second estimate in (\ref{BBTbound}) we use the 
identities $\bfI = \bfPsh + \bfn_h \otimes \bfn_h$ and 
(\ref{Bsimple}) to conclude that 
\begin{equation}
\bfB^T \bfB - \bfI 
= \bfPsh \bfPs \bfPsh + \bfn_h \otimes \bfn_h - \bfI + O(h^{k_g+1})
= \bfPsh \bfPs \bfPsh - \bfPsh + O(h^{k_g+1})
\end{equation}
Now  
\begin{equation}
\bfPsh \bfPs \bfPsh - \bfPsh = (\bfPsh \bfn) \otimes (\bfPsh \bfn)
= (\bfPsh (\bfn- \bfn_h)) \otimes (\bfPsh (\bfn - \bfn_h))
\sim O(h^{2k_g})
\end{equation}
where we used the bound for the error in the discrete normal 
(\ref{eq:geombounds}). Thus the second bound in 
(\ref{BBTbound}) follows.

\textcolor{black}{Further,} the surface measure $d \Gamma = |\bfB| d \Gammah$,
where $|\bfB| =| \text{det}(\bfB)|$ is the absolute value 
of the determinant of $\bfB$ and we have the following 
estimates
\begin{equation}\label{detBbound}
\| 1 - |\bfB| \|_{L^\infty(\Gamma_h)} \lesssim h^{k_g+1}, 
\quad \||\bfB|\|_{L^\infty(\Gamma_h)} \lesssim 1, \quad \||\bfB|^{-1}\|_{L^\infty(\Gamma_h)} \lesssim 1
\end{equation}

\subsection{Norm Equivalences}

In view of the bounds in Section \ref{sec:Bbounds} and the 
identities (\ref{eq:tanderext}) and (\ref{eq:tanderlift}) 
we obtain the following equivalences
\begin{equation}\label{eq:normequ}
\| v^l \|_{L^2(\Gamma)}
\sim \| v \|_{L^2(\Gammah)}, \qquad \| v \|_{L^2(\Gamma)} \sim
\| v^e \|_{L^2(\Gammah)}
\end{equation}
and
\begin{equation}\label{eq:normequgrad}
\| \nabla_\Gamma v^l \|_{L^2(\Gamma)} \sim \| \nablash v \|_{L^2(\Gammah)},
\qquad \| \nablas v \|_{L^2(\Gamma)} \sim
\| \nablash v^e \|_{L^2(\Gammah)}
\end{equation}

\subsection{Poincar\'e Inequality}

We have the  following Poincar\'e inequality 
\begin{equation}\label{PoincareGammah}
\| v \|_{\Gammah} 
\lesssim 
\| \nablash v \|_{\Gammah} 
\quad \forall v \in Q_h
\end{equation}
To prove (\ref{PoincareGammah}) we let 
\begin{equation}\label{eq:average}
\lambda_S(v) = |S|^{-1}  \int_S v, \qquad S \in \{\Gamma,\Gammah\}
\end{equation}
be the average over $S$. Using the fact that $\alpha = 
\lambda_\Gammah(v)$ is 
the constant that minimizes 
$\| v - \alpha \|_{\Gammah}$, norm equivalence (\ref{eq:normequ}) to pass from $\Gammah $ to $\Gamma$,  the standard  Poincar\'e estimate on $\Gamma$, and at last norm equivalence (\ref{eq:normequgrad}) to pass back to $\Gammah$, we obtain 
\begin{equation}
\| v - \lambda_\Gammah(v) \|_{\Gammah} 
\leq \|v - \lambda_\Gamma(v^l) \|_{\Gammah} 
\lesssim \|v^l -\lambda_\Gamma(v^l)\|_\Gamma 
\lesssim \| \nablas v^l \|_{\Gamma}
\lesssim \| \nablash v \|_{\Gammah}
\end{equation}
which proves (\ref{PoincareGammah}).
\section{Error Estimates}

\subsection{Norms}
Let 
\begin{equation}
\tn (\bfv,q)\tn^2 = \|\bfv \|^2_\Gamma + \| \nablas q \|^2_\Gamma, 
\qquad 
\tn (\bfv,q)\tn_h^2 = \|\bfv \|^2_\Gammah + \| \nablash q \|^2_\Gammah
\end{equation}
Using (\ref{eq:normequ}) and (\ref{eq:normequgrad}) we have the following equivalences 
\begin{equation}
\tn (\bfv^l,q^l) \tn \sim \tn (\bfv,q) \tn_h, 
\qquad 
\tn (\bfv,q) \tn \sim \tn (\bfv^e,q^e) \tn_h
\end{equation}

\subsection{Coercivity and Continuity}
\begin{lem}\label{lem:coercont} The following \textcolor{black}{statements} hold:
\begin{itemize}
\item The form $A_h$ is coercive 
and continuous
\begin{equation}\label{eq:coer}
\tn (\bfv,q ) \tn_h^2 \lesssim A_h((\bfv,q ),(\bfv,q ))
\qquad \forall (\bfv,q) \in \bfV_h \times Q_h
\end{equation}
\begin{equation}\label{eq:cont}
A_h((\bfv,q ),(\bfw,r))
\lesssim \tn (\bfv,q )\tn_h \tn (\bfw,r)\tn_h
\qquad \forall(\bfv,q), (\bfw,r) \in \bfV_h \times Q_h
\end{equation}
for all $h\in (0,h_0]$. 
\item The form  $L_h$ is continuous
\begin{equation}\label{eq:Lhcont}
L_h((\bfv,q)) \lesssim (\|f\|_\Gamma + \| \bfg \|_\Gamma ) 
\tn (\bfv,q) \tn_h \qquad \forall(\bfv,q) \in \bfV_h \times Q_h
\end{equation}
for all $h\in (0,h_0]$. 

\item There exists a  unique solution to (\ref{eq:fem}).
\end{itemize}
\end{lem}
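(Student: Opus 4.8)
The plan is to dispatch the three claims in turn, exploiting the skew-symmetric structure of the Masud--Hughes form for coercivity, Cauchy--Schwarz for the two continuity bounds, and a kernel argument in finite dimensions for well-posedness. First I would verify coercivity by testing $A_h$ on the diagonal. Using the expanded form of $A_h$ and setting the two arguments equal, the cross terms $\tfrac{1}{2}(\nablash q,\bfv)_\Gammah$ and $-\tfrac{1}{2}(\bfv,\nablash q)_\Gammah$ cancel by symmetry of the $L^2(\Gammah)$ inner product, leaving
\begin{equation}
A_h((\bfv,q),(\bfv,q)) = \tfrac{1}{2}\|\bfv\|^2_\Gammah + \tfrac{1}{2}\|\nablash q\|^2_\Gammah = \tfrac{1}{2}\tn(\bfv,q)\tn_h^2,
\end{equation}
which is exactly (\ref{eq:coer}) with the mesh-independent constant $1/2$. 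This is precisely where the stabilization pays off: coercivity is automatic and requires no inf--sup condition.

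For continuity of $A_h$ I would apply Cauchy--Schwarz term by term to the four contributions of the expanded form; each of $(\bfv,\bfw)_\Gammah$, $(\nablash q,\nablash r)_\Gammah$, $(\nablash q,\bfw)_\Gammah$ and $(\bfv,\nablash r)_\Gammah$ is bounded by one factor drawn from $\tn(\bfv,q)\tn_h$ times one from $\tn(\bfw,r)\tn_h$, giving (\ref{eq:cont}) at once. For $L_h$ I would likewise bound $(f^e,q)_\Gammah$ and $\tfrac{1}{2}(\bfg^e,\bfv+\nablash q)_\Gammah$ by Cauchy--Schwarz. The factors $\|\bfv\|_\Gammah$ and $\|\nablash q\|_\Gammah$ are immediately controlled by $\tn(\bfv,q)\tn_h$, and the norms $\|f^e\|_\Gammah$, $\|\bfg^e\|_\Gammah$ are equivalent to $\|f\|_\Gamma$, $\|\bfg\|_\Gamma$ via (\ref{eq:normequ}). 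The one substantive point is the factor $\|q\|_\Gammah$ appearing in $(f^e,q)_\Gammah$: since $q\in Q_h$ has vanishing mean over $\Gammah$, the Poincar\'e inequality (\ref{PoincareGammah}) yields $\|q\|_\Gammah \lesssim \|\nablash q\|_\Gammah \le \tn(\bfv,q)\tn_h$, and collecting terms gives (\ref{eq:Lhcont}). This is the only place where the constraint $\smallint_\Gammah q = 0$ built into $Q_h$ is genuinely used.

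For existence and uniqueness I would note that $\bfV_h\times Q_h$ is finite-dimensional, so (\ref{eq:fem}) is a square linear system; it suffices to show the homogeneous problem has only the trivial solution. If $(\bfv,q)$ solves it, then coercivity forces $\tn(\bfv,q)\tn_h = 0$, hence $\bfv = \bfzero$ and $\nablash q = \bfzero$, so $q$ is constant; the zero-mean constraint then gives $q=0$. Thus the kernel is trivial and the system is uniquely solvable for every right-hand side. I do not anticipate a serious obstacle: the result is a clean consequence of the Masud--Hughes structure, and the only step beyond Cauchy--Schwarz is the Poincar\'e bound on $\|q\|_\Gammah$, together with the attendant observation that $\tn\cdot\tn_h$ is a genuine norm on the mean-zero space $Q_h$, which is what converts coercivity into uniqueness.
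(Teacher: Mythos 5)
Your proof is correct and follows essentially the same route as the paper: coercivity from the cancellation of the skew-symmetric cross terms in the stabilized form (with the explicit constant $1/2$), Cauchy--Schwarz for both continuity bounds, and the Poincar\'e inequality (\ref{PoincareGammah}) together with the norm equivalences (\ref{eq:normequ}) to control $\|q\|_\Gammah$ and pass from $f^e,\bfg^e$ to $f,\bfg$ in the bound for $L_h$. The only cosmetic difference is that the paper invokes the Lax--Milgram lemma for well-posedness of (\ref{eq:fem}) where you give the equivalent finite-dimensional kernel argument; both hinge on the point you correctly flag, namely that $\tn\cdot\tn_h$ is a genuine norm on $\bfV_h\times Q_h$ thanks to the zero-mean constraint built into $Q_h$.
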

\begin{proof} Coercivity of $A_h$ follows directly from the 
definition (\ref{eq:mhform}) of the stabilized bilinear 
form. Continuity of $A_h$ follows from the Cauchy-Schwarz 
inequality.  Continuity of $L_h$ follows directly from the Cauchy-Schwarz inequality and the Poincar\'e inequality. Existence and uniqueness of a solution to (\ref{eq:fem}) follows from the 
Lax-Milgram lemma.
\end{proof}

\begin{rem} Clearly the analogous results holds for the 
continuous forms $A$ and $L$ on $\bfV\times Q$, defined in (\ref{eq:mhformcont}) and (\ref{eq:mhrighthandsidecont}), and 
the variational problem (\ref{eq:mhweak}).
\end{rem}

\subsection{Discrete Stability Estimate}

\begin{lem}\label{lem:stab} The solution $(\bfu_h,p_h)$ to (\ref{eq:fem}), satisfies the stability estimate
\begin{equation}\label{eq:staba}
\tn (\bfu_h,p_h) \tn_h 
+ h^{-1} \| \bfn \cdot \bfu_h \|_\Gammah
\lesssim \|f \|_{\Gamma} + \| \bfg \|_\Gamma 
\end{equation}
\end{lem}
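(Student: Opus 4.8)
The plan is to separate the energy-norm bound, which is essentially automatic, from the normal-component bound, which is the real content and which I would obtain by testing with a suitably interpolated normal field. For the energy norm, since $(\bfu_h,p_h)$ solves (\ref{eq:fem}) I would take $(\bfv,q)=(\bfu_h,p_h)$ and chain coercivity (\ref{eq:coer}), the equation itself, and the continuity (\ref{eq:Lhcont}) of $L_h$,
\[ \tn(\bfu_h,p_h)\tn_h^2 \lesssim A_h((\bfu_h,p_h),(\bfu_h,p_h)) = L_h((\bfu_h,p_h)) \lesssim (\|f\|_\Gamma+\|\bfg\|_\Gamma)\,\tn(\bfu_h,p_h)\tn_h, \]
so that $\tn(\bfu_h,p_h)\tn_h\lesssim\|f\|_\Gamma+\|\bfg\|_\Gamma$. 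In particular $\|\bfu_h\|_\Gammah$ and $\|\nablash p_h\|_\Gammah$ are bounded by the right-hand side, and by the norm equivalence (\ref{eq:normequ}) so is $\|\bfg^e\|_\Gammah$.

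For the normal component I would first record the residual identity obtained by testing (\ref{eq:fem}) with $(\bfv,0)$: the stabilization terms collapse and one is left with $(\boldsymbol{r},\bfv)_\Gammah=0$ for all $\bfv\in\bfV_h$, where $\boldsymbol{r}:=\bfu_h+\nablash p_h-\bfg^e$. Writing $w:=\bfn\cdot\bfu_h$, I would \emph{not} test with $w\bfn$ (which is not in $\bfV_h$), but with $\bfv=\pi_h(W\bfn)\in\bfV_h$, where $W:=\pi_h w\in V_{h,k_u,k_g}$ is the finite element interpolant of $w$. Orthogonality then gives $(\boldsymbol{r},W\bfn)_\Gammah=(\boldsymbol{r},(I-\pi_h)(W\bfn))_\Gammah$. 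On the left I would use that $\bfg^e\cdot\bfn=(\bfg\cdot\bfn)\circ\bfp=0$, because $\bfg$ is tangential and $\bfn=\nabla\rho$ is constant along normals; that $\nablash p_h\cdot\bfn=\nablash p_h\cdot(\bfn-\bfn_h)=O(h^{k_g})$ by (\ref{eq:geombounds}), since $\nablash p_h\perp\bfn_h$; and that $(w,W)_\Gammah=(w,\pi_h w)_\Gammah\ge\|w\|_\Gammah^2-\|w\|_\Gammah\,\|(I-\pi_h)w\|_\Gammah$.

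Bounding the right-hand side by Cauchy--Schwarz and the super-approximation (\ref{eq:superapproximation}) (with the smooth factors $\bfn$ and $\bfn\otimes\bfn$), I would use $\|(I-\pi_h)(W\bfn)\|_\Gammah\lesssim h\|W\|_\Gammah$ and $\|(I-\pi_h)w\|_\Gammah\lesssim h\|\bfu_h\|_\Gammah$, so that $\|W\|_\Gammah\lesssim\|w\|_\Gammah+h(\|f\|_\Gamma+\|\bfg\|_\Gamma)$, together with $\|\boldsymbol{r}\|_\Gammah\lesssim\|f\|_\Gamma+\|\bfg\|_\Gamma$ from the first step. Collecting every term and using $k_g\ge 1$ leaves
\[ \|w\|_\Gammah^2 \lesssim h(\|f\|_\Gamma+\|\bfg\|_\Gamma)\|w\|_\Gammah + h^2(\|f\|_\Gamma+\|\bfg\|_\Gamma)^2, \]
and a Young inequality absorbs the first term, yielding $\|w\|_\Gammah\lesssim h(\|f\|_\Gamma+\|\bfg\|_\Gamma)$, which is exactly the claimed bound for $h^{-1}\|\bfn\cdot\bfu_h\|_\Gammah$.

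The main obstacle is precisely the consistency error $(\boldsymbol{r},(I-\pi_h)(W\bfn))_\Gammah$ created by $W\bfn\notin\bfV_h$. The tempting multiplier $w$ is inadmissible, and estimating $\|(I-\pi_h)(w\bfn)\|_\Gammah\lesssim h\|\bfu_h\|_\Gammah$ against $\|\boldsymbol{r}\|_\Gammah\lesssim\|f\|_\Gamma+\|\bfg\|_\Gamma$ only delivers the suboptimal $\|w\|_\Gammah\lesssim h^{1/2}(\|f\|_\Gamma+\|\bfg\|_\Gamma)$. The device that restores the full power of $h$ is to interpolate $w$ \emph{before} forming the test field, so that super-approximation produces the factor $h\|W\|_\Gammah\lesssim h(\|w\|_\Gammah+h(\|f\|_\Gamma+\|\bfg\|_\Gamma))$; this reproduces $\|w\|_\Gammah$ on the right and makes the term absorbable. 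The only remaining care is to verify that the geometric terms involving $\bfn-\bfn_h$ are genuinely $O(h^{k_g})\le O(h)$ and that $\pi_h(W\bfn)$ indeed lies in $\bfV_h$.
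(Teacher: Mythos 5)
Your proposal is correct and follows essentially the same route as the paper: the energy bound via coercivity and continuity, then the residual identity from testing with $(\bfv,0)$ and the very same test function $\pi_h(W\bfn)=\pi_h(\bfn\,\pi_h(\bfn\cdot\bfu_h))$, controlled by super--approximation (\ref{eq:superapproximation}), interpolation stability (\ref{eq:interpolstab}), the tangentiality of $\bfg$ and of $\nablash p_h$, and the geometry bound (\ref{eq:geombounds}). The only difference is organizational: you peel off the interpolation error once via the orthogonality $(\boldsymbol{r},W\bfn)_\Gammah=(\boldsymbol{r},(I-\pi_h)(W\bfn))_\Gammah$, whereas the paper estimates the three pairings $I$, $II$, $III$ with the interpolated test field directly--the same cancellations and the same $O(h)$ gains appear in both.
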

\begin{proof}  {\bf First Test Function.} With $(\bfv,q) = (\bfu_h,p_h)$ in (\ref{eq:fem}) 
we obtain
\begin{equation}
\tn (\bfu_h, p_h)\tn_h^2 
\lesssim A_h((\bfu_h, p_h),(\bfu_h, p_h))
= L_h((\bfu_h, p_h)) 
\lesssim  (\|f \|_{\Gamma} + \| \bfg \|_\Gamma)  \tn (\bfu_h, p_h) \tn_h
\end{equation} 
where we used (\ref{eq:coer}) and (\ref{eq:cont}) together with 
the continuity 
\begin{equation}
L_h((\bfv,q)) \lesssim 
( \|f \|_{\Gamma} + \| \bfg \|_\Gamma)  \tn (\bfv,q)\tn_h, \forall 
(\bfv,q) \in \bfV_h \times Q_h
\end{equation}
of $L_h$, which follows 
from Cauchy-Schwarz and the Poincar\'e inequality 
(\ref{PoincareGammah}). Thus we conclude that 
\begin{equation}\label{eq:energystab}
\tn (\bfu_h, p_h)\tn_h\lesssim \|f \|_{\Gamma} + \| \bfg \|_\Gamma
\end{equation}

\paragraph{Second Test Function.}
Setting $q=0$ in (\ref{eq:fem}) 
we note that the following equation holds
\begin{equation}
(\bfu_h,\bfv)_{\Gamma_h} + (\nablash p_h,\bfv)_{\Gamma_h} = 
(\bfg^e,\bfv)_{\Gamma_h}, \qquad \forall \bfv\in \bfV_h
\end{equation}
Choosing the test function $\bfv =\pi_h (\bfn \pi_h(\bfn \cdot \bfu_h))$ we get the identity
\begin{equation}
\underbrace{(\bfu_h,\pi_h(\bfn \pi_h (\bfn \cdot \bfu_h)))_{\Gamma_h}}_{I} 
+
\underbrace{(\nablash p_h, \pi_h(\bfn \pi_h (\bfn\cdot\bfu_h)) )_{\Gamma_h}}_{II} 
=
\underbrace{(\bfg^e,\pi_h(\bfn \pi_h (\bfn \cdot \bfu_h)))_{\Gamma_h}}_{III}
\end{equation} 

\paragraph{Term $\bfI$.} We have
\begin{align}
\nonumber
&(\bfu_h,\pi_h(\bfn \pi_h (\bfn\cdot \bfu_h)))_{\Gamma_h} 
\\
&\qquad=
(\bfn \cdot \bfu_h , \bfn \cdot \bfu_h)_{\Gamma_h} 
+ (\pi_h (\bfn \pi_h (\bfn \cdot \bfu_h)) - \bfn (\bfn \cdot \bfu_h) ,
 \bfn \cdot \bfu_h)_{\Gamma_h} 
\\
&\qquad \geq
\| \bfn \cdot \bfu_h\|^2_\Gammah
- 
\|\pi_h (\bfn \pi_h (\bfn \cdot \bfu_h)) - \bfn (\bfn \cdot \bfu_h)\|_\Gammah
\| \bfn \cdot \bfu_h\|^2_\Gammah
\\
&\qquad \geq
( 1 - \delta) \| \bfn \cdot \bfu_h\|^2_\Gammah
- 
\delta^{-1} \underbrace{\|\pi_h (\bfn \pi_h (\bfn \cdot \bfu_h)) - \bfn (\bfn \cdot \bfu_h)\|_\Gammah^2}_{\bigstar}
\end{align}
where we used the Cauchy-Schwarz inequality and finally 
 $ab \leq \delta a^2 + \delta^{-1} b^2, \delta>0$.
Estimating the second term on the right hand side by adding and subtracting suitable terms we obtain
\begin{align}
\bigstar& \lesssim 
\|(\pi_h -I) (\bfn \pi_h (\bfn \cdot \bfu_h)) \|^2_\Gammah
+
\|\bfn (\pi_h - I)  (\bfn \cdot \bfu_h) \|^2_\Gammah
\\
&\lesssim h^2\| \pi_h (\bfn \cdot \bfu_h) \|^2_\Gammah
+
h^2 \| \bfu_h \|_\Gammah^2
\\
&\lesssim h^2 \| \bfu_h \|^2_\Gammah
\end{align}
where we used \textcolor{black}{the super--approximation} (\ref{eq:superapproximation}) 
 and in the last step the $L^2$ stability (\ref{eq:interpolstab}) of the interpolant $\pi_h$. We thus arrive at 
\begin{equation}\label{stab:aaI}
I \gtrsim 
(1-\delta) \| \bfn \cdot \bfu_h \|^2_\Gammah  
- 
C h^2 \| \bfu_h\|^2_\Gammah
\end{equation}
%
%

\paragraph{Term $\bfI\bfI$.} We have 
\begin{align}
II &\lesssim 
\|\nablash p_h\|_{\Gamma_h} 
\|\bfPsh \pi_h(\bfn \pi_h (\bfn \cdot\bfu_h)) \|_{\Gamma_h}
\\
&
\lesssim h \|\nablash p_h\|_{\Gamma_h} 
\|\bfn \cdot\bfu_h \|_{\Gamma_h}
\\ \label{stab:aaII}
&\leq  \delta^{-1} C h^2 \| \nablash p_h \|^2_{\Gamma_h}
+\delta \|\bfn \cdot\bfu_h\|^2_{\Gamma_h}
\end{align}
Here we used the estimate 
\begin{align}
\nonumber
&\|\bfPsh \pi_h(\bfn \pi_h (\bfn \cdot\bfu_h)) \|_{\Gamma_h}
\\
&\qquad \lesssim 
\|\bfPsh (\pi_h-I)(\bfn \pi_h (\bfn \cdot\bfu_h)) \|_{\Gamma_h}
+ \|(\bfPsh \bfn)\pi_h (\bfn \cdot\bfu_h )\|_{\Gamma_h}
\\
&\qquad \lesssim 
h \|\pi_h (\bfn \cdot\bfu_h) \|_{\Gamma_h}
+ \|\bfPsh \bfn\|_{L^\infty(\Gamma_h)} 
\|\pi_h (\bfn \cdot\bfu_h) \|_{\Gamma_h}
\\
&\qquad \lesssim 
h \|\bfn \cdot\bfu_h \|_{\Gamma_h} 
+ h^{k_g} \|\bfn \cdot\bfu_h \|_{\Gamma_h}
\\
&\qquad \lesssim h \|\bfn \cdot\bfu_h \|_{\Gamma_h}
\end{align}
where we added and subtracted $ \bfn \pi_h (\bfn \cdot\bfu_h )$, 
used the triangle inequality \textcolor{black}{followed by the super--approximation 
(\ref{eq:superapproximation}) and the}
$L^2$ stability (\ref{eq:interpolstab}) of the interpolant, \textcolor{black}{and finally 
used} the fact that $k_g \geq 1$ and $h\in (0,h_0]$.

\paragraph{Term $\bfI\bfI\bfI$.} We have
\begin{align}
|III|&=|(\bfg^e,\pi_h(\bfn \pi_h (\bfn\cdot \bfu_h)))_{\Gamma_h}|
\\
&=
|(\bfg^e,(\pi_h - I)(\bfn \pi_h (\bfn\cdot \bfu_h)))_{\Gamma_h}|
\\
&\lesssim 
\|\bfg^e\|_{\Gamma_h} \|(\pi_h - I)(\bfn \pi_h (\bfn\cdot \bfu_h))\|_{\Gamma_h}
\\
&\lesssim 
h \|\bfg^e\|_{\Gamma_h} \| \pi_h (\bfn \cdot \bfu_h) \|_{\Gamma_h}
\\
&\lesssim 
h \|\bfg^e\|_{\Gamma_h} \| \bfn \cdot \bfu_h \|_{\Gamma_h}
\\ \label{stab:aaIII}
&\leq 
\delta^{-1} C h^2  \|\bfg^e\|_{\Gamma_h}^2 
+
\delta \| \bfn\cdot \bfu_h \|^2_{\Gamma_h}
\end{align}
where we used the fact that $(\bfg^e,\bfn)_\Gamma = 0$ to 
subtract $\bfn \pi_h (\bfn\cdot \bfu_h)$, used the Cauchy-Schwarz 
inequality,  used \textcolor{black}{the super--approximation} (\ref{eq:superapproximation}), 
and the $L^2$ stability (\ref{eq:interpolstab}) of the interpolant.

\paragraph{Conclusion.}
Collecting the bounds (\ref{stab:aaI}), (\ref{stab:aaII}), 
and (\ref{stab:aaIII}), for $I$, $II$, and $III$, respectively, 
we arrive at
\begin{align}
\left(1 - 3 \delta \right)
\| \bfn \cdot \bfu_h \|^2_{\Gamma_h} 
&\lesssim
\delta^{-1} h^2 \|\bfg^e \|_{\Gamma_h} 
+ \delta^{-1} h^2 \| \bfu_h \|^2_{\Gamma_h} 
+ \delta^{-1} h^2 \|\nablash p_h\|^2_{\Gamma_h}
\\
&\lesssim
\delta^{-1} h^2 \left( \|\bfg^e\|_{\Gamma_h} 
+ \tn (\bfu_h,p_h) \tn^2_{h} \right)
\\
&\lesssim
\delta^{-1} h^2 \left( \|f \|_\Gamma + \|\bfg\|_{\Gamma}  \right) 
\end{align}
where we used (\ref{eq:energystab}) in the last step. Choosing 
$\delta$ small enough completes the proof.
\end{proof}

\subsection{Interpolation Error Estimates}
Using the interpolation error estimate (\ref{eq:interpol}) we 
directly obtain the following interpolation estimates in the energy norm
\begin{align}
\tn (\bfv, q) - (\pi_h\bfv^e,\pi_h q^e)^l \tn 
&\sim 
\tn (\bfv, q) - (\pi_h\bfv^e,\pi_h q^e)\tn_h 
\\ \label{eq:interpol-energy}
& \lesssim 
h^{k_u + 1} \|\bfv\|_{H^{k_u+1}(\Gamma)}
+ 
h^{k_p} \| q \|_{H^{k_p+1}(\Gamma)} 
\end{align}
for $(\bfv,q) \in H^{k_u+1}(\Gamma) \times H^{k_p+1}(\Gamma).$
If $\bfv$ is tangential, $\bfn \cdot \bfv = 0$, we also have the estimate
\begin{equation}\label{eq:interpol-normal}
\|\bfn \cdot \pi_h \bfv \|_\Gammah 
= 
 \|\bfn \cdot (\pi_h \bfv - \bfv) \|_\Gammah
\lesssim 
h \|\bfv \|_{H^1(\Gamma)}  
\end{equation}

\subsection{Strang's Lemma}

\begin{lem}\label{lem:strang}  Let $(\bfu,p)$ be the 
solution to (\ref{eq:darcytan}) and $(\bfu_h,p_h)$ the 
solution to (\ref{eq:fem}), then the following estimate 
holds 
\begin{align}\label{eq:strang}
\tn (\bfu,p) - (\bfu_h,p_h)^l \tn
&\lesssim \tn (\bfu^e - \pi_h \bfu^e,p^e - \pi_h p^e) \tn_h
\\ 
\nonumber 
& \qquad 
+\frac{A((\pi_h \bfu^e ,\pi_h p^e)^l,(\bfv,q)^l) 
- A_h(\pi_h \bfu^e,\pi_h p^e),(\bfv,q)))}{\tn  (\bfv,q) \tn_h}
\\ 
\nonumber
& \qquad 
+ \frac{L(\bfv,q)^l) 
- L_h((\bfv,q))}{\tn (\bfv,q) \tn_h}
\end{align} 
for all $h\in (0,h_0]$.
%
\end{lem}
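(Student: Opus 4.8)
The plan is to follow the classical Strang argument: insert the interpolant, split the error into an interpolation part and a discrete part, and control the latter by coercivity of $A_h$ together with the discrete equation. By the triangle inequality,
\[\tn (\bfu,p) - (\bfu_h,p_h)^l \tn \leq \tn (\bfu,p) - (\pi_h\bfu^e,\pi_h p^e)^l \tn + \tn (\pi_h\bfu^e,\pi_h p^e)^l - (\bfu_h,p_h)^l \tn.\]
Since $(\bfu,p)=((\bfu^e)^l,(p^e)^l)$, the first term equals $\tn\bigl((\bfu^e-\pi_h\bfu^e)^l,(p^e-\pi_h p^e)^l\bigr)\tn$, which by the equivalence between the lifted and discrete triple norms is bounded by the first term on the right-hand side of (\ref{eq:strang}). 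For the second term I set $(\bfv,q):=(\pi_h\bfu^e-\bfu_h,\pi_h p^e-p_h)\in\bfV_h\times Q_h$, the discrete error, and pass to $\Gammah$ by the same equivalence, so that it remains to bound $\tn(\bfv,q)\tn_h$.

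Next I would use coercivity (\ref{eq:coer}) to obtain $\tn(\bfv,q)\tn_h^2\lesssim A_h((\bfv,q),(\bfv,q))$ and expand the right-hand side by bilinearity into $A_h((\pi_h\bfu^e,\pi_h p^e),(\bfv,q))-A_h((\bfu_h,p_h),(\bfv,q))$. Because $(\bfv,q)$ is an admissible discrete test function, the finite element equation (\ref{eq:fem}) replaces the last term by $L_h((\bfv,q))$, leaving
\[\tn(\bfv,q)\tn_h^2\lesssim A_h((\pi_h\bfu^e,\pi_h p^e),(\bfv,q))-L_h((\bfv,q)).\]

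The crux is to bring in the continuous forms on the lifted arguments. I would add and subtract $A((\pi_h\bfu^e,\pi_h p^e)^l,(\bfv,q)^l)$, $A((\bfu,p),(\bfv,q)^l)$, and $L((\bfv,q)^l)$, grouping the result as
\begin{align*}
&\bigl[A_h((\pi_h\bfu^e,\pi_h p^e),(\bfv,q))-A((\pi_h\bfu^e,\pi_h p^e)^l,(\bfv,q)^l)\bigr]\\
&\quad+A\bigl((\pi_h\bfu^e,\pi_h p^e)^l-(\bfu,p),(\bfv,q)^l\bigr)\\
&\quad+\bigl[A((\bfu,p),(\bfv,q)^l)-L((\bfv,q)^l)\bigr]+\bigl[L((\bfv,q)^l)-L_h((\bfv,q))\bigr].
\end{align*}
The third bracket vanishes: the solution of (\ref{eq:darcytan}) also satisfies the continuous Masud-Hughes weak form (\ref{eq:mhweak}), since the stabilization is consistent (on $\Gamma$ one has $\bfu+\nablas p=\bfg$ and $\divs\bfu=f$), so $A((\bfu,p),(\bfv,q)^l)=L((\bfv,q)^l)$. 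The second group is bounded, via the continuous counterpart of the continuity estimate (\ref{eq:cont}), by $\lesssim\tn(\pi_h\bfu^e,\pi_h p^e)^l-(\bfu,p)\tn\,\tn(\bfv,q)^l\tn$, in which the first factor is again the interpolation error (first right-hand side term) and $\tn(\bfv,q)^l\tn\sim\tn(\bfv,q)\tn_h$. The remaining first and fourth brackets are the numerators of the two quotient terms in (\ref{eq:strang}), the sign being immaterial for the upper bound. Dividing through by $\tn(\bfv,q)\tn_h$ and combining with the interpolation contribution from the triangle inequality yields the claim.

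The content of this lemma is organizational rather than analytical: no sharp bound is obtained here, as the two geometric inconsistency numerators are deliberately retained and will be estimated in the later quadrature-error analysis. The step demanding the most care is the consistent bookkeeping of lifts and extensions, keeping $(\bfu,p)$, which lives on $\Gamma$, paired with lifted test functions while all discrete quantities live on $\Gammah$, together with the observation that the exact Darcy solution is Galerkin orthogonal for the continuous Masud-Hughes form, which is what makes the third bracket drop out. Every passage between $\Gamma$ and $\Gammah$ uses only the norm equivalences together with the coercivity and continuity of $A_h$ from Lemma~\ref{lem:coercont}.
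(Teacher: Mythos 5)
Your proposal is correct and follows essentially the same route as the paper: triangle inequality with the interpolant, coercivity of $A_h$ to control the discrete error, Galerkin orthogonality via (\ref{eq:fem}), insertion of the exact Masud--Hughes weak form $A((\bfu,p),(\bfv,q)^l)=L((\bfv,q)^l)$, and the same four-way regrouping into the two geometric-inconsistency numerators plus an interpolation term bounded by continuity of $A$. The only cosmetic difference is that you test coercivity directly with the discrete error while the paper first passes to a sup over all discrete test functions, which is equivalent.
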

\begin{proof} 
Adding and subtracting an interpolant and 
using the triangle inequality we obtain
\begin{align}
 \tn (\bfu,p) - (\bfu_h^l,p_h^l) \tn
 &\sim 
 \tn (\bfu^e,p^e) - (\bfu_h,p_h) \tn_h
 \\ 
 &\lesssim 
 \tn (\bfu^e,p^e) - (\pi_h \bfu^e,\pi_h p^e) \tn_h 
 + \tn (\pi_h \bfu^e,\pi_h p^e) - (\bfu_h,p_h) \tn_h
\end{align}
To estimate the second term use (\ref{eq:coer}) to 
conclude that
\begin{align}
\label{eq:infsup}
\tn (\pi_h \bfu^e, \pi_h p^e) - (\bfu_h,p_h) \tn_h
&\lesssim 
\sup_{(\bfv,q)\in \bfV_h\times Q_h} \frac{A_h( (\pi_h \bfu^e, \pi_h p^e) - (\bfu_h,p_h),(\bfv,q))}{\tn (\bfv,q)\tn_h}
\end{align}
Using Galerkin orthogonality (\ref{eq:fem}) to eliminate $(\bfu_h,p_h)$, and then adding the weak form of the exact 
problem, the numerator may be written 
in the following form
\begin{align}\nonumber
&A_h( (\pi_h \bfu^e,\pi_h p^e)  - (\bfu_h,u_h),(\bfv,q)) 
\\
&\qquad = A_h( (\pi_h \bfu^e,\pi_h p^e),(\bfv,q)) - L_h((\bfv,q))
\\
&\qquad = A_h( (\pi_h \bfu^e,\pi_h p^e),(\bfv,q)) 
\underbrace{- A((\bfu,p),(\bfv,q)^l) + L((\bfv,q)^l)}_{=0}
- L_h((\bfv,q))
\\
&\qquad =A_h( (\pi_h \bfu^e,\pi_h p^e),(\bfv,q)) 
- \textcolor{black}{A( (\pi_h \bfu^e,\pi_h p^e)^l,(\bfv,q)^l)}
\\ \nonumber
&\qquad \qquad 
+ L((\bfv,q)^l) - L_h((\bfv,q))
\\ \nonumber
&\qquad \qquad + A( ((\pi_h \bfu^e)^l - u,(\pi_h p^e)^l - p),(\bfv,q)^l)
\end{align}
where at last we added and subtracted an interpolant. Estimating 
the right hand side and using (\ref{eq:infsup}) the lemma follows immediately.
\end{proof}

\subsection{Quadrature Error Estimates}

\begin{lem}\label{lem:quad} The following estimates hold 
\begin{align}
\label{eq:quada}
|(\bfv^l,\bfw^l)_{\mcK_h^l} - (\bfv,\bfw)_{\mcK_h}| 
&\lesssim h^{k_g+1}\|\bfv\|_{\mcK_h} \|\bfw\|_{\mcK_h} 
\\ \label{eq:quadb}
|(\bfv^l,\nablas q^l)_{\mcK_h^l} 
- (\bfv,\nablash q)_{\mcK_h}|  
&\lesssim h^{k_g+1} \big( \|\bfv\|_{\mcK_h} 
+ h^{-1} \|\bfn \cdot \bfv\|_{\mcK_h} \big)
\|\nablash q \|_{\mcK_h} 
%
\\ \label{eq:quadc}
|(\nablas q^l, \nablas r^l)_{\mcK_h^l} 
- (\nablash q, \nablash r)_{\mcK_h}|
&\lesssim h^{k_g+1} \|\nablash q \|_{\mcK_h^l} 
\|\nablash r \|_{\mcK_h}
\\ \label{eq:quadl}
|L((v^L,q^l)) - L_h((v,q))| 
&\lesssim h^{k_g+1} (\|f \|_{\Gamma} + \|\bfg\|_{\Gamma} ) 
\tn (v,q) \tn_h
\end{align}
for all $(\bfv,q),(\bfw,r) \in \bfV_h \times Q_h$ and $h\in (0,h_0]$.
\end{lem}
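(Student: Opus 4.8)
My plan is to reduce each of the four differences to a single integral over $\Gammah$ via the change of variables $d\Gamma = |\bfB|\,d\Gammah$ together with the lifting identity (\ref{eq:tanderlift}), so that every difference takes the form $\int_{\Gammah}(\bfM-\bfI)(\cdot)\cdot(\cdot)\,d\Gammah$ for a matrix field $\bfM$ assembled from $|\bfB|$ and powers of $\bfB$; the rate is then dictated by how well $\bfM$ approximates $\bfI$ on the subspace the integrand lives in. For (\ref{eq:quada}) one has $(\bfv^l,\bfw^l)_{\mcK_h^l}=\int_{\Gammah}\bfv\cdot\bfw\,|\bfB|\,d\Gammah$, so the difference equals $\int_{\Gammah}(|\bfB|-1)\,\bfv\cdot\bfw$ and the estimate is immediate from $\|1-|\bfB|\|_{L^\infty(\Gammah)}\lesssim h^{k_g+1}$ in (\ref{detBbound}) and Cauchy--Schwarz. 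For (\ref{eq:quadc}), inserting (\ref{eq:tanderlift}) twice gives $\bfM=|\bfB|\bfB^{-1}\bfB^{-T}=|\bfB|(\bfB^T\bfB)^{-1}$; since $\bfB$ and $\bfB^{-1}$ preserve the tangent spaces by (\ref{BMapPreservesspaces})--(\ref{BMapInversePreservesspaces}), the fields $\nablash q,\nablash r$ stay tangential and I only need $\bfM$ restricted to $T_{\bfx}(K)$, where $\bfB^T\bfB=\bfI+O(h^{k_g+1})$ from (\ref{BBTbound}) and $|\bfB|=1+O(h^{k_g+1})$ from (\ref{detBbound}) give $\bfM-\bfI=O(h^{k_g+1})$ at once.

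The estimate (\ref{eq:quadb}) is the delicate one, since here $\bfM=|\bfB|\bfB^{-T}$ only obeys the crude bound $\|\bfM-\bfI\|_{L^\infty}\lesssim h^{k_g}$ coming from $\|\bfB-\bfI\|\lesssim h^{k_g}$, one order short of the claim. The key observation is that this lost power sits entirely in the normal direction. Writing the difference as $\int_{\Gammah}\bfv\cdot(|\bfB|\bfB^{-T}-\bfI)\nablash q$ and splitting $\bfv=\bfPsh\bfv+(\bfn_h\cdot\bfv)\bfn_h$, I treat the two pieces separately. For the tangential piece I need the tangent--tangent block $\bfPsh(|\bfB|\bfB^{-T}-\bfI)\bfPsh$; using $\bfB^{-T}=\bfPs(\bfI-\rho\bfkappa)^{-1}\bfPsh+\bfn\otimes\bfn_h$ one finds $\bfPsh\bfB^{-T}\bfPsh=\bfPsh\bfPs(\bfI-\rho\bfkappa)^{-1}\bfPsh$, and then $\rho=O(h^{k_g+1})$ from (\ref{eq:geombounds}) with $\|\bfkappa\|_{L^\infty}\lesssim 1$ from (\ref{kappalinf}), together with $\bfPsh\bfPs\bfPsh-\bfPsh=-(\bfPsh\bfn)\otimes(\bfPsh\bfn)=O(h^{2k_g})$ and $2k_g\geq k_g+1$, give $\bfPsh\bfB^{-T}\bfPsh=\bfPsh+O(h^{k_g+1})$; combined with $|\bfB|=1+O(h^{k_g+1})$ this is the order-gaining step, furnishing the $h^{k_g+1}\|\bfv\|_{\mcK_h}$ contribution. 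For the normal piece only the crude $O(h^{k_g})$ bound survives (after using $\bfn_h\cdot\nablash q=0$ and that $\bfB^{-T}\nablash q$ is tangential to $\Gamma$), producing $h^{k_g}\|\bfn_h\cdot\bfv\|_{\mcK_h}\|\nablash q\|_{\mcK_h}$; converting $\bfn_h\cdot\bfv=\bfn\cdot\bfv+(\bfn_h-\bfn)\cdot\bfv$ through $\|\bfn-\bfn_h\|_{L^\infty}\lesssim h^{k_g}$ turns this into $h^{k_g}\|\bfn\cdot\bfv\|+h^{2k_g}\|\bfv\|$, which is precisely the $h^{k_g+1}(\|\bfv\|+h^{-1}\|\bfn\cdot\bfv\|)$ structure claimed.

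Finally, (\ref{eq:quadl}) follows by applying the first three estimates termwise to $L((\bfv^l,q^l))-L_h((\bfv,q))$. The $(f,\cdot)$ term is handled as in (\ref{eq:quada}) using $|\bfB|-1=O(h^{k_g+1})$, the $(\bfg,\bfv)$ term likewise, and the $(\bfg,\nablash q)$ term by (\ref{eq:quadb}) with $\bfv$ replaced by $\bfg^e$ (the arguments being purely geometric, they apply to any $L^2$ field, not only to $\bfV_h$); crucially $\bfg$ is tangential, so $\bfn\cdot\bfg^e=0$ and the dangerous normal contribution drops out, leaving the clean rate. The norm equivalences (\ref{eq:normequ}) replace $\|f^e\|_{\Gammah},\|\bfg^e\|_{\Gammah}$ by $\|f\|_\Gamma,\|\bfg\|_\Gamma$, and the Poincar\'e inequality (\ref{PoincareGammah}) absorbs the factor $\|q\|_{\Gammah}\lesssim\|\nablash q\|_{\Gammah}$ into $\tn(\bfv,q)\tn_h$.

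The main obstacle is the tangent--tangent block computation in (\ref{eq:quadb}): one must verify that the first-order geometric error $\bfB-\bfI=O(h^{k_g})$ cancels to second order once symmetrized and restricted to the tangent plane, while tracking that the single surviving power of $h$ attaches to the normal component $\bfn\cdot\bfv$, which is exactly the quantity controlled at rate $h^{-1}$ by the stability estimate (\ref{eq:staba}). Keeping the two normals $\bfn$ and $\bfn_h$ straight throughout this splitting is where the bookkeeping care is required.
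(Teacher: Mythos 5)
Your proposal is correct and follows essentially the same route as the paper: change of variables via $|\bfB|\,d\Gammah$, the lift identity (\ref{eq:tanderlift}), and the geometric bounds (\ref{BBTbound}), (\ref{detBbound}), with the extra order in (\ref{eq:quadb}) recovered by isolating the normal direction---the only organizational difference being that you split the vector $\bfv = \bfPsh\bfv + (\bfn_h\cdot\bfv)\bfn_h$ and then convert $\bfn_h\cdot\bfv$ to $\bfn\cdot\bfv$, whereas the paper splits the matrix, $\bfPsh(|\bfB|\bfB^{-1}-\bfI) = O(h^{k_g+1}) + (\bfPsh\bfn)\otimes\bfn$, so that the exact normal $\bfn\cdot\bfv$ appears at once. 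One point in your favor: for (\ref{eq:quadl}) you explicitly treat the cross term $(\bfg^e,\nablash q)_{\mcK_h}$ by reusing the (\ref{eq:quadb}) argument and the tangentiality $\bfn\cdot\bfg^e=0$ to kill the $h^{k_g}$ normal contribution, a term the paper's displayed proof silently omits even though it is present in $L_h$.
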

\begin{proof} {\bf (\ref{eq:quada}):} Changing domain of 
integration from $\mcK_h^l$ to $\mcK_h$ in the first term 
and using the bound (\ref{detBbound}) for $|\bfB|$ we obtain
\begin{align}
|(\bfv^l,\bfw^l)_{\mcK_h^l} - (\bfv,\bfw)_{\mcK_h}|
&=|((|\bfB|-1)\bfv,\bfw)_{\mcK_h}|
\\
&\lesssim \| |\bfB| - 1 \|_{L^\infty(\mcK_h)} 
\|\bfv \|_{\mcK_h}  \| \bfw \|_{\mcK_h} 
\\
&\lesssim h^{k_g+1} \| \bfv \|_{\mcK_h}  \| \bfw \|_{\mcK_h} 
\end{align}

\noindent{\bf (\ref{eq:quadb}):} Changing domain of integration 
from $\mcK_h^l$ to $\mcK_h$ in the first term we obtain
\begin{align}
\nonumber
&|(\bfv^l,\nablas q^l)_{\mcK_h^l} 
- (\bfv,\nablash q)_{\mcK_h}| 
\\
&\qquad = |( \bfv^l, \bfB^{-T} (\nablash q)^l)_{\mcK_h^l} 
- (\bfv,\nablash q)_{\mcK_h}| 
\\
&\qquad = |( (\bfPsh (|\bfB| \bfB^{-1} -\bfI) \bfv , 
\nablash q)_{\mcK_h}|
\end{align}
Here we have the identity 
\begin{align}
\bfPsh (|\bfB| \bfB^{-1} - \bfI) \bfv 
&= \bfPsh (|\bfB|-1) \bfB^{-1} +
\bfPsh (\bfB^{-1} - \bfI)  
\end{align}
where we note that the first term i $O(h^{k_g+1})$ using (\ref{detBbound}) and the second takes the form 
\begin{align}
\bfPsh (\bfB^{-1} - \bfI) 
&= 
\bfPsh (\bfPsh \bfPs + \bfn_h \otimes \bfn + O(h^{k_g+1}) - \bfI)  
\\
&=\bfPsh \bfQs + O(h^{k_g+1}) 
\end{align}
Thus we conclude that
\begin{align}
\nonumber
|(\bfv^l,\nablas q^l)_{\mcK_h^l} 
- (\bfv,\nablash q)_{\mcK_h}|
&=|( (\bfPsh (|\bfB| \bfB^{-1} -\bfI) \bfv , 
\nablash q)_{\mcK_h}|
\\
&\lesssim h^{k_g+1} \|\bfv \|_{\mcK_h} \|\nablash q \|_{\mcK_h}
+ |( (\bfPsh \bfn) (\bfn \cdot \bfv),\nablash q)_{\mcK_h} | 
\\
&\lesssim h^{k_g+1} \|\bfv \|_{\mcK_h} \|\nablash q \|_{\mcK_h}
+ h^{k_g} \|\bfn \cdot \bfv\|_{\mcK_h} \|\nablash q\|_{\mcK_h}  
\end{align}

\noindent{\bf (\ref{eq:quadc}):} Using (\ref{eq:tanderlift}), changing domain of integration from $\mcK_h^l$ to $\mcK_h$ 
in the first term we obtain
\begin{align}
\nonumber
&|(\nablas q^l,\nablas r^l)_{\mcK_h^l} 
- (\nablash q,\nablash r)_{\mcK_h}|
\\
&\qquad 
=
|(\bfB^{-T} (\nablash q)^l, \bfB^{-T} (\nablash r)^l)_{\mcK_h^l} 
- (\nablash q,\nablash r)_{\mcK_h}|
\\
&\qquad 
=|( (|\bfB|\bfB^{-1} \bfB^{-T} - \bfI)\nablash q,\nablash r)_{\mcK_h}|
\\
&\qquad 
\lesssim  \| (|\bfB| \bfB^{-1}\bfB^{-T} - \bfI) \|_{L^\infty(\mcK_h)} 
\| \nablash q \|_{\mcK_h}  \| \nablash r \|_{\mcK_h} 
\\
&\qquad 
\lesssim h^{k_g+1} \| \nablash q \|_{\mcK_h}  \| \nablash r \|_{\mcK_h} 
\end{align}
Here we used the estimate
\begin{align}
& \| (|\bfB| \bfB^{-1}\bfB^{-T} - \bfI) \|_{L^\infty(\mcK_h)} 
\lesssim 
 \| |\bfB| - 1\|_{L^\infty(\mcK_h)} 
 \| \bfB^{-1}\|_{L^\infty(\mcK_h)} \|\bfB^{-T}\|_{L^\infty(\mcK_h)}
\\
&\qquad  +  \| \bfB^{-1}\|_{L^\infty(\mcK_h)} 
 \| \bfI - \bfB \bfB^T \|_{L^\infty(\mcK_h)} 
 \|\bfB^{-T}\|_{L^\infty(\mcK_h)} \lesssim h^{k_g+1}
\end{align}
where we employed (\ref{BBinvbound}), (\ref{BBTbound}), and (\ref{detBbound}).

\noindent{\bf (\ref{eq:quadl}):} Changing domain of integration 
from $\mcK_h^l$ to $\mcK_h$ and using (\ref{detBbound}) we obtain
\begin{align}
&|(f,q^l)_{\mcK_h^l} - (f^e,q)_{\mcK_h}| 
+ |(\bfg,\bfv^l)_{\mcK_h} - (\bfg^e,\bfv)_{\mcK_h}|
\\
&\qquad = |((|\bfB|-1)f^e,q )_{\mcK_h}| 
+ |((|\bfB|-1) \bfg^e,\bfv)_{\mcK_h}|
\\
&\qquad \lesssim  \| |\bfB|-1\|_{L^\infty(\Gamma_h)} ( \|f^e\|_{\mcK_h}
\|q\|_{\mcK_h} + \|\bfg^e\|_{\mcK_h} \|\bfv\|_{\mcK_h} )
\\
&\qquad  \lesssim \| |\bfB|-1\|_{L^\infty(\Gamma_h)} 
( \|f^e\|^2_{\mcK_h} +  \|\bfg^e\|^2_{\mcK_h})^{1/2}
( \|\nablash q\|^2_{\mcK_h} + \|\bfv\|^2_{\mcK_h} )^{1/2}
\\
&\qquad \lesssim h^{k_g+1} (\|f\|_{\Gamma} + \|\bfg \|_\Gamma ) 
\tn (\bfv,q) \tn_h
\end{align}
where we used the Poincar\'e inequality (\ref{PoincareGammah}).
\end{proof}
We collect our results in a convenient form for the developments 
below in the following corollary.
\begin{cor}\label{cor:quadest} The following estimates hold
\begin{align}\nonumber
&|A((\bfv,q)^l,(\bfw,r)^l)
- A_h((\bfv,q), (\bfw,r))|
\\ \label{eq:quadestAh}
&\qquad \lesssim h^{k_g+1}
\Big(\tn (\bfv,q)\tn_h + h^{-1}\|\bfn \cdot \bfv \|^2_\Gammah \Big)
\Big(\tn (\bfw,r)\tn_h + h^{-1}\|\bfn \cdot \bfw \|^2_\Gammah \Big)
\end{align}
and
\begin{equation}\label{eq:quadestLh}
|L((\bfv,q)^l) - L_h((\bfv,q))|
\lesssim h^{k_g+1}\Big(\|f\|_{\Gamma} + \| \bfg \|_{\Gamma} \Big) 
 \tn (\bfv,q) \tn_h 
\end{equation}
for all $(\bfv,q)$ and $(\bfw,r)  \in \bfV_h \times Q_h$ and $h \in (0,h_0]$.
\end{cor}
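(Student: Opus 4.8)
The plan is to observe that the corollary is essentially a repackaging of Lemma~\ref{lem:quad}: once the Masud-Hughes form is written in its expanded four-term shape (the symmetric/skew form in \eqref{eq:mhform}, mirrored by \eqref{eq:mhformcont} on the exact surface), the difference $A((\bfv,q)^l,(\bfw,r)^l) - A_h((\bfv,q),(\bfw,r))$ splits, by linearity, into four quadrature defects of exactly the types already controlled. Recalling that $\mcK_h^l = \Gamma$ componentwise, I would expand
\[
2\big(A((\bfv,q)^l,(\bfw,r)^l) - A_h((\bfv,q),(\bfw,r))\big)
\]
into the sum of (i) $(\bfv^l,\bfw^l)_{\Gamma} - (\bfv,\bfw)_{\Gammah}$, (ii) $(\nablas q^l,\nablas r^l)_{\Gamma} - (\nablash q,\nablash r)_{\Gammah}$, (iii) $(\nablas q^l,\bfw^l)_{\Gamma} - (\nablash q,\bfw)_{\Gammah}$, and (iv) $-\big((\bfv^l,\nablas r^l)_{\Gamma} - (\bfv,\nablash r)_{\Gammah}\big)$.

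Then I would bound each piece by the corresponding line of Lemma~\ref{lem:quad}. Term (i) is exactly \eqref{eq:quada}, giving $h^{k_g+1}\|\bfv\|_{\Gammah}\|\bfw\|_{\Gammah}$; term (ii) is exactly \eqref{eq:quadc}, giving $h^{k_g+1}\|\nablash q\|_{\Gammah}\|\nablash r\|_{\Gammah}$. The two mixed terms (iii) and (iv) are instances of \eqref{eq:quadb}, where the only care needed is to place the vector-valued argument in the slot carrying the normal defect: for (iii) I apply \eqref{eq:quadb} to the pair with vector field $\bfw$ and scalar $q$, producing $h^{k_g+1}\big(\|\bfw\|_{\Gammah} + h^{-1}\|\bfn\cdot\bfw\|_{\Gammah}\big)\|\nablash q\|_{\Gammah}$, and for (iv) to the pair with vector field $\bfv$ and scalar $r$, producing the symmetric bound involving $\bfv$ and $\nablash r$.

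Finally I would collect the four bounds, absorb $\|\bfv\|_{\Gammah}$ and $\|\nablash q\|_{\Gammah}$ into $\tn(\bfv,q)\tn_h$ via the definition of the mesh-dependent norm (and likewise for $(\bfw,r)$), and note that every resulting product is dominated by the single factorized expression $h^{k_g+1}\big(\tn(\bfv,q)\tn_h + h^{-1}\|\bfn\cdot\bfv\|_{\Gammah}\big)\big(\tn(\bfw,r)\tn_h + h^{-1}\|\bfn\cdot\bfw\|_{\Gammah}\big)$, which yields \eqref{eq:quadestAh}. The estimate \eqref{eq:quadestLh} for the right-hand side requires no new work: it is precisely \eqref{eq:quadl} of Lemma~\ref{lem:quad}.

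The computation is routine, so the only real difficulty is bookkeeping rather than analysis: one must track which argument of each mixed inner product is the vector field, so that the penalty $h^{-1}\|\bfn\cdot\cdot\|_{\Gammah}$ inherited from \eqref{eq:quadb} is attached to the correct factor, and one must recognize that all cross terms are swallowed by the symmetric factorization on the right of \eqref{eq:quadestAh}. Since $\tn\cdot\tn_h$ already bundles the mass and gradient contributions, no separate treatment of the diagonal versus the mixed blocks is needed. (As a minor consistency check, the normal-defect contributions on the right of \eqref{eq:quadestAh} should be read as the first powers $h^{-1}\|\bfn\cdot\bfv\|_{\Gammah}$ and $h^{-1}\|\bfn\cdot\bfw\|_{\Gammah}$ produced by \eqref{eq:quadb}.)
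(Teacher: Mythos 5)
Your proposal is correct and is exactly the argument the paper intends: the corollary is stated without proof as a direct collection of Lemma~\ref{lem:quad}, and your expansion of the symmetric/skew form of $A$ and $A_h$ into the four quadrature defects \eqref{eq:quada}, \eqref{eq:quadc}, and the two mixed instances of \eqref{eq:quadb}, followed by the factorization into $\tn\cdot\tn_h$ plus normal-defect terms, is the intended bookkeeping. Your parenthetical observation is also right: the squared norms $h^{-1}\|\bfn\cdot\bfv\|^2_\Gammah$ in \eqref{eq:quadestAh} are a typo for the first powers, as confirmed by how the estimate is invoked in Theorems~\ref{thm:energyest} and \ref{thm:L2estu}.
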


\subsection{Error Estimates}

\begin{thm}\label{thm:energyest} Let $(\bfu,p)$ be the 
solution to (\ref{eq:darcytan}) and $(\bfu_h,p_h)$ 
the solution to (\ref{eq:fem}) and assume that 
the geometry approximation property holds, then for 
the following estimate 
holds
\begin{align}\label{eq:energyest}
\tn (\bfu - \bfu_{h}^l,  p - p_h^l ) \tn
&\lesssim h^{k_u+1}\| \bfu \|_{H^{k_u+1}(\Gamma)} 
+ h^{k_p} \| p \|_{H^{k_p+1}(\Gamma)}
+ h^{k_g} (\|f\|_\Gamma + \|\bfg\|_{\Gamma} )
\end{align}
for all $h\in (0,h_0]$.
\end{thm}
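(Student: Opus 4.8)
The plan is to establish quasi-optimality through Strang's Lemma (Lemma~\ref{lem:strang}), and then feed in the interpolation estimate (\ref{eq:interpol-energy}) for the approximation part and the consistency bounds of Corollary~\ref{cor:quadest} for the geometric part; the only genuinely delicate issue is the control of the normal component of the velocity error, which the energy norm $\tn\cdot\tn_h$ does not see because the tangential condition is enforced only weakly (recall $c_N=0$).

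First I would use the triangle inequality, as in the proof of Lemma~\ref{lem:strang}, to split $\tn(\bfu,p)-(\bfu_h,p_h)^l\tn$ into the interpolation part $\tn(\bfu^e-\pi_h\bfu^e,\,p^e-\pi_h p^e)\tn_h$ and the discrete part $\tn(\bfe,\eta)\tn_h$, where $\bfe:=\pi_h\bfu^e-\bfu_h$ and $\eta:=\pi_h p^e-p_h$. The interpolation part is bounded directly by (\ref{eq:interpol-energy}), yielding the $h^{k_u+1}\|\bfu\|_{H^{k_u+1}(\Gamma)}+h^{k_p}\|p\|_{H^{k_p+1}(\Gamma)}$ contributions. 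For the discrete part I would apply coercivity (\ref{eq:coer}) with the \emph{concrete} test function $(\bfe,\eta)$ rather than passing to the abstract supremum, since this is what lets me control the normal component below. Using the discrete equation (\ref{eq:fem}) (Galerkin orthogonality) and then adding and subtracting the exact Masud--Hughes weak form (\ref{eq:mhweak}) with lifted test function, $\tn(\bfe,\eta)\tn_h^2$ is bounded by three terms: the $A$-quadrature error $A((\pi_h\bfu^e,\pi_h p^e)^l,(\bfe,\eta)^l)-A_h((\pi_h\bfu^e,\pi_h p^e),(\bfe,\eta))$, the continuous interpolation error $A(((\pi_h\bfu^e)^l-\bfu,(\pi_h p^e)^l-p),(\bfe,\eta)^l)$, and the $L$-quadrature error $L((\bfe,\eta)^l)-L_h((\bfe,\eta))$.

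The interpolation error term is controlled by continuity of $A$ (the lifted form of (\ref{eq:cont})) together with (\ref{eq:interpol-energy}), and the $L$-quadrature error by (\ref{eq:quadestLh}); each carries a factor $\tn(\bfe,\eta)\tn_h$ to be absorbed. The heart of the matter is the $A$-quadrature term, which Corollary~\ref{cor:quadest} bounds by
\[
h^{k_g+1}\big(\tn(\pi_h\bfu^e,\pi_h p^e)\tn_h+h^{-1}\|\bfn\cdot\pi_h\bfu^e\|_\Gammah\big)\big(\tn(\bfe,\eta)\tn_h+h^{-1}\|\bfn\cdot\bfe\|_\Gammah\big).
\]
Here I must bound the two normal factors a priori. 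Since the exact velocity is tangential, $\bfn\cdot\bfu=0$, I have $\bfn\cdot\pi_h\bfu^e=\bfn\cdot(\pi_h\bfu^e-\bfu^e)$, so (\ref{eq:interpol-normal}) gives $h^{-1}\|\bfn\cdot\pi_h\bfu^e\|_\Gammah\lesssim\|\bfu\|_{H^1(\Gamma)}$; combining this with the discrete stability estimate (\ref{eq:staba}), $h^{-1}\|\bfn\cdot\bfu_h\|_\Gammah\lesssim\|f\|_\Gamma+\|\bfg\|_\Gamma$, yields $h^{-1}\|\bfn\cdot\bfe\|_\Gammah\lesssim\|f\|_\Gamma+\|\bfg\|_\Gamma$. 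The factor $\tn(\pi_h\bfu^e,\pi_h p^e)\tn_h$ is controlled by interpolation stability (\ref{eq:interpolstab}) and elliptic regularity (\ref{eq:ellreg}). Substituting, the $A$-quadrature term is $\lesssim h^{k_g+1}(\|f\|_\Gamma+\|\bfg\|_\Gamma)\big(\tn(\bfe,\eta)\tn_h+\|f\|_\Gamma+\|\bfg\|_\Gamma\big)$.

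Finally I would collect the three contributions, move every term proportional to $\tn(\bfe,\eta)\tn_h$ to the left-hand side by Young's inequality and absorb it, and take square roots, so that the geometry-induced contribution appears at the stated order $h^{k_g}(\|f\|_\Gamma+\|\bfg\|_\Gamma)$, the approximation terms already being in the required form. I expect the main obstacle to be exactly this normal-component bookkeeping: the energy norm provides no control of $\bfn\cdot\bfe$, so the apparently harmless factors $h^{-1}\|\bfn\cdot(\cdot)\|_\Gammah$ occurring in the geometric consistency error can only be handled by invoking the independent second-test-function stability bound of Lemma~\ref{lem:stab} together with the tangentiality of the exact solution, and it is the interplay of these two facts that pins down the final geometric power.
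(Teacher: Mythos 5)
Your proposal follows the paper's skeleton — Strang's lemma (Lemma~\ref{lem:strang}), the interpolation estimate (\ref{eq:interpol-energy}), and the quadrature bounds of Corollary~\ref{cor:quadest} — and your treatment of the \emph{first} normal-component factor, $h^{-1}\|\bfn\cdot\pi_h\bfu^e\|_\Gammah\lesssim\|\bfu\|_{H^1(\Gamma)}$ via tangentiality of the exact velocity and (\ref{eq:interpol-normal}), is exactly the paper's. The gap is in your treatment of the \emph{second} (test-function) factor. By invoking Lemma~\ref{lem:stab} to bound $h^{-1}\|\bfn\cdot\bfe\|_\Gammah\lesssim\|f\|_\Gamma+\|\bfg\|_\Gamma+\|\bfu\|_{H^1(\Gamma)}$, you turn part of the $A$-quadrature error into a term of the form
\begin{equation*}
h^{k_g+1}\,C_1\,C_2,\qquad C_1,C_2\ \text{independent of}\ \tn(\bfe,\eta)\tn_h ,
\end{equation*}
which cannot be absorbed into the left-hand side: in the quadratic inequality $\tn(\bfe,\eta)\tn_h^2\lesssim b\,\tn(\bfe,\eta)\tn_h+c$ it sits in $c$, and contributes $\sqrt{c}\sim h^{(k_g+1)/2}$ to the final bound. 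Since $(k_g+1)/2=k_g$ only when $k_g=1$, your argument delivers the claimed rate $h^{k_g}$ only for affine geometry approximation; for $k_g\geq 2$ it yields $h^{(k_g+1)/2}$ (e.g.\ $h^{3/2}$ instead of $h^{2}$ for the quadratic geometries used in the paper's numerics), which is strictly weaker than the theorem.

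The paper's route is the opposite of what you describe as the ``only'' option, and it is worth seeing why. The energy norm \emph{does} control the normal component — $\|\bfn\cdot\bfv\|_\Gammah\leq\|\bfv\|_\Gammah\leq\tn(\bfv,q)\tn_h$ — just without the $h^{-1}$ weight. The paper therefore bounds the test-function factor crudely, $\tn(\bfv,q)\tn_h+h^{-1}\|\bfn\cdot\bfv\|_\Gammah\lesssim h^{-1}\tn(\bfv,q)\tn_h$, accepting the loss of exactly one power of $h$; the whole quadrature error then stays \emph{linear} in $\tn(\bfv,q)\tn_h$, so it divides out in the Strang quotient with no Young's inequality or square root, and the resulting $h^{k_g+1}\cdot h^{-1}=h^{k_g}$ is precisely the origin of the $h^{k_g}$ (rather than $h^{k_g+1}$) in the statement. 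Lemma~\ref{lem:stab} is not needed in this proof at all; the paper reserves that device for the duality arguments of Theorems~\ref{thm:L2estp} and~\ref{thm:L2estu}, where the error representation is linear in the discrete quantities (no absorption step), so constant bounds on $h^{-1}\|\bfn\cdot\bfu_h\|_\Gammah$ enter harmlessly and in fact buy the sharper $h^{k_g+1}$ rates there. To repair your proof, simply replace the stability bound on $h^{-1}\|\bfn\cdot\bfe\|_\Gammah$ by the crude bound $h^{-1}\tn(\bfe,\eta)\tn_h$ and absorb.
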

\begin{proof} Starting from Strang's lemma we need to estimate the 
three terms on the right hand side in (\ref{eq:strang}).  For the first 
term using the interpolation estimate gives
\begin{equation}
\tn \bfu -\pi_h \bfu^e, p - \pi_h p^e \tn 
\lesssim 
h^{k_u+1}\| u \|_{H^{k_u+1}(\Gamma)} 
+ 
h^{k_p} \| p \|_{H^{k_p+1}(\Gamma)}
\end{equation}
For the second term  using the quadrature estimate (\ref{eq:quadestAh}) we obtain
\begin{align}
\nonumber
& \Big|A((\pi_h \bfu^e ,\pi_h p^e)^l,(\bfv,q)^l) 
- A_h(\pi_h \bfu^e,\pi_h p^e),(\bfv,q)))\Big|
\\
&\qquad 
\lesssim h^{k_g+1} 
\underbrace{\Big( \tn (\pi_h \bfu^e ,\pi_h p^e) \tn_h 
+ h^{-1} \|\bfn \cdot \pi_h \bfu^e \|_\Gammah \Big)}_{
\lesssim \tn (\bfu,p)\tn +  \| \bfu \|_{H^1(\Gamma)}
}
 \underbrace{\Big( \tn (\bfv ,q) \tn_h 
+ h^{-1} \|\bfn \cdot \bfv \|_\Gammah \Big)}_{\lesssim h^{-1} \tn (\bfv ,q) \tn_h}
\\
&\qquad 
\lesssim h^{k_g} (  \|\bfu\|_{H^1(\Gamma)} + \|p\|_{H^1(\Gamma)}  ) 
 \tn (\bfv ,q) \tn_h 
\end{align}
where we used the interpolation estimate (\ref{eq:interpol-normal}) 
for the first term. For the third 
term, applying (\ref{eq:quadestLh}), directly gives 
\begin{equation}
\Big| L((\bfv,q)^l) -  L_h(\bfv,q) \Big| \lesssim h^{k_g+ 1} 
\Big(\|f\|_\Gamma + \|\bfg\|_\Gamma \Big) \tn (\bfv ,q) \tn_h 
\end{equation}
Combining the three estimates with the Strang lemma we directly 
obtain the desired estimate.
%
%
%
%
\end{proof}


\begin{thm}\label{thm:L2estp} Under the same assumptions 
as in Theorem \ref{thm:energyest} the following estimate 
holds
\begin{align}\nonumber
 \| p - p_h^l \|_\Gamma
&\lesssim h^{k_u+2}\| \bfu \|_{H^{k_u+1}(\Gamma)} 
+ h^{k_p+1} \| p \|_{H^{k_p+1}(\Gamma)}
+ h^{k_g+1} (\|f\|_\Gamma + \|\bfg\|_\Gamma )
\end{align}
for all $h\in (0,h_0]$. 
\end{thm}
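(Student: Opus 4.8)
The plan is to run a duality (Aubin--Nitsche) argument that trades the extra regularity of an auxiliary dual problem for one additional power of $h$ over the energy estimate of Theorem \ref{thm:energyest}. Write $e := p - p_h^l$ for the pressure error and let $\bar e := e - \lambda_\Gamma(e)$ be its zero--mean part on $\Gamma$, with $\lambda_\Gamma$ the average (\ref{eq:average}). First I would introduce the dual problem: find $\psi \in Q$ and $\bfphi = \nablas\psi$ such that $A((\bfw,r),(\bfphi,\psi)) = (\bar e, r)_\Gamma$ for all $(\bfw,r)\in\bfV\times Q$. Testing the defining form (\ref{eq:mhformcont}) shows that this reduces to the Laplace--Beltrami problem $(\nablas\psi,\nablas r)_\Gamma = (\bar e,r)_\Gamma$ together with $\bfphi=\nablas\psi$; in particular the dual velocity $\bfphi$ is \emph{tangential}, which will be decisive. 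The elliptic regularity estimate (\ref{eq:ellreg}) with $s=0$ gives $\|\bfphi\|_{H^1(\Gamma)} + \|\psi\|_{H^2(\Gamma)} \lesssim \|\bar e\|_\Gamma$.

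Next I would represent the error. Since $A$ depends on its first--argument pressure only through $\nablas(\cdot)$, it is insensitive to additive constants there, so testing the dual problem with $(\bfw,r)=(\bfu-\bfu_h^l,\,p-p_h^l)$ (noting $(\bar e,e)_\Gamma=\|\bar e\|_\Gamma^2$) gives $\|\bar e\|_\Gamma^2 = A((\bfu-\bfu_h^l,p-p_h^l),(\bfphi,\psi))$. Using the lifted interpolants $\bfphi_h=\pi_h\bfphi^e$, $\psi_h=\pi_h\psi^e$, I split this as
\[
A((\bfu-\bfu_h^l,p-p_h^l),(\bfphi-\bfphi_h^l,\psi-\psi_h^l)) + A((\bfu-\bfu_h^l,p-p_h^l),(\bfphi_h^l,\psi_h^l)) =: T_1 + T_2 .
\]
For $T_1$, continuity of $A$ bounds it by the product of the energy error $\tn(\bfu-\bfu_h^l,p-p_h^l)\tn$, controlled by Theorem \ref{thm:energyest}, and the dual interpolation error $\tn(\bfphi-\bfphi_h^l,\psi-\psi_h^l)\tn \lesssim h(\|\bfphi\|_{H^1(\Gamma)}+\|\psi\|_{H^2(\Gamma)}) \lesssim h\|\bar e\|_\Gamma$, obtained from (\ref{eq:interpol-energy}) with $s=1$ for the velocity and $s=2$ for the pressure. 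That extra factor $h$ is exactly what upgrades each term of the energy estimate by one power, yielding $|T_1| \lesssim \big(h^{k_u+2}\|\bfu\|_{H^{k_u+1}(\Gamma)} + h^{k_p+1}\|p\|_{H^{k_p+1}(\Gamma)} + h^{k_g+1}(\|f\|_\Gamma+\|\bfg\|_\Gamma)\big)\|\bar e\|_\Gamma$.

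For $T_2$ I would invoke both Galerkin formulations. Because $A$ and $L$ depend on the second--argument pressure only through $\nablas(\cdot)$ and $(f,\cdot)$ with $\int_\Gamma f=0$, they are insensitive to constants there, so the continuous weak form gives $A((\bfu,p),(\bfphi_h^l,\psi_h^l))=L((\bfphi_h^l,\psi_h^l))$, while the scheme (\ref{eq:fem}) gives $A_h((\bfu_h,p_h),(\bfphi_h,\psi_h))=L_h((\bfphi_h,\psi_h))$. Adding and subtracting these makes the discrete residual cancel and leaves the two consistency terms $T_{2a}=L((\bfphi_h^l,\psi_h^l))-L_h((\bfphi_h,\psi_h))$ and $T_{2b}=A_h((\bfu_h,p_h),(\bfphi_h,\psi_h))-A((\bfu_h^l,p_h^l),(\bfphi_h^l,\psi_h^l))$. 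Corollary \ref{cor:quadest} bounds $|T_{2a}|$ by $h^{k_g+1}(\|f\|_\Gamma+\|\bfg\|_\Gamma)\tn(\bfphi_h,\psi_h)\tn_h$ and $|T_{2b}|$ by $h^{k_g+1}\big(\tn(\bfu_h,p_h)\tn_h+h^{-1}\|\bfn\cdot\bfu_h\|_\Gammah\big)\big(\tn(\bfphi_h,\psi_h)\tn_h+h^{-1}\|\bfn\cdot\bfphi_h\|_\Gammah\big)$; the first factor is controlled by the stability estimate (\ref{eq:staba}), and $\tn(\bfphi_h,\psi_h)\tn_h\lesssim\|\bar e\|_\Gamma$ by interpolation stability (\ref{eq:interpolstab}) and the dual regularity bound.

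The crux, and the step I expect to be the main obstacle, is the normal term $h^{-1}\|\bfn\cdot\bfphi_h\|_\Gammah$ in $T_{2b}$: a naive bound would destroy the very power of $h$ the duality argument is meant to gain. The remedy is that the dual velocity $\bfphi=\nablas\psi$ is exactly tangential, so $\bfn\cdot\bfphi^e=0$ on $\Gammah$ and the interpolation estimate (\ref{eq:interpol-normal}) gives $\|\bfn\cdot\bfphi_h\|_\Gammah\lesssim h\|\bfphi\|_{H^1(\Gamma)}$, cancelling the $h^{-1}$ and leaving $h^{-1}\|\bfn\cdot\bfphi_h\|_\Gammah\lesssim\|\bar e\|_\Gamma$. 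Hence $|T_{2a}|+|T_{2b}|\lesssim h^{k_g+1}(\|f\|_\Gamma+\|\bfg\|_\Gamma)\|\bar e\|_\Gamma$. Collecting $T_1$, $T_{2a}$, $T_{2b}$ and dividing by $\|\bar e\|_\Gamma$ bounds the zero--mean part by the right--hand side of the theorem. Finally I would dispose of the mean: since $\int_\Gamma p=0$ and $\int_\Gammah p_h=0$, the determinant estimate (\ref{detBbound}) gives $|\lambda_\Gamma(p-p_h^l)|=|\lambda_\Gamma(p_h^l)|\lesssim h^{k_g+1}\|p_h\|_\Gammah$, which the Poincar\'e inequality (\ref{PoincareGammah}) and the stability estimate (\ref{eq:staba}) bound by $h^{k_g+1}(\|f\|_\Gamma+\|\bfg\|_\Gamma)$; adding this to $\|\bar e\|_\Gamma$ completes the estimate for $\|p-p_h^l\|_\Gamma$.
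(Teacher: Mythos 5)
Your proposal is correct and follows essentially the same route as the paper's own proof: the same duality argument with a tangential dual velocity $\bfphi=\nablas\chi$ (so that the tangentiality, via (\ref{eq:interpol-normal}), neutralizes the $h^{-1}\|\bfn\cdot\pi_h\bfphi\|_{\Gammah}$ factor in the quadrature bound — the crux you correctly identified), the same splitting into a dual-interpolation term plus $L$- and $A$-consistency terms obtained from the continuous and discrete Galerkin identities estimated by Corollary \ref{cor:quadest}, Lemma \ref{lem:stab}, and (\ref{eq:interpolstab}), and the same $O(h^{k_g+1})$ treatment of the mean defect $\lambda_\Gamma(p_h^l)$ via (\ref{detBbound}), the Poincar\'e inequality, and the stability estimate. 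The only differences are cosmetic: you build the dual problem directly with data $\bar e$ and verify its reduction to Laplace--Beltrami, whereas the paper keeps abstract dual data $\psi$ and normalizes it to the zero-mean error at the end.
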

\begin{proof} Recall that $\lambda_S(v) = |S|^{-1}\int_S v$ is 
the average of a function in $L^2(S)$, $S\in \{\Gamma, \Gammah\}$, see (\ref{eq:average}). Then we have
\begin{equation}
\|p  -p_h^l\|_\Gamma 
\leq  \underbrace{\|p - (p_h^l - \lambda_\Gamma(p_h^l) )\|_\Gamma}_{I} 
+ \underbrace{| \lambda_\Gamma(p_h^l) -  \lambda_\Gammah(p_h)|}_{II}
\end{equation}
where we added and subtracted $\lambda_\Gamma(p_h^l)$ 
and used the fact that $\lambda_\Gammah(p_h) = 0$.

\paragraph{Term $\bfI$.}
Let $(\bfphi,\chi) \in \bfV_t \times Q$ be 
the solution to the continuous dual problem
\begin{alignat}{2}\label{eq:Darcydual}
-\divs \bfphi &= \psi \qquad &\text{on $\Gamma$}
\\ \label{eq:Darcyb}
-\bfphi_t + \nablas \chi &= \bfzero \qquad &\text{on $\Gamma$}
\end{alignat}  
for $\psi \in L^2(\Gamma)$ with $\int_\Gamma \psi =0$.  We then 
have the elliptic regularity bound
\begin{equation}\label{eq:contdualstab}
\textcolor{black}{\|\bfphi\|_{H^1(\Gamma)} + \| \chi \|_{H^2(\Gamma)} }
\lesssim \|\psi\|_\Gamma
\end{equation}
Furthermore, we have the weak form
\begin{equation}
A((\bfv,q),(\bfphi,\chi)) = (q,\psi)  \qquad \forall (\bfv,q) 
\in \bfV \times Q
\end{equation}
and setting $(\bfv,q) = (\bfu - \bfu_h^l,p-p_h^l)$ we obtain 
the error representation formula 
\begin{align}
(p-p_h^l,\psi) &= A((\bfu- \bfu_h^l,p - p_h^l),(\bfphi,\chi)) 
\\
&=A((\bfu- \bfu_h^l,p - p_h^l),(\bfphi,\chi) - (\pi_h\bfphi,\pi_h\chi)) 
\\ \nonumber
&\quad + A((\bfu- \bfu_h^l,p - p_h^l),(\pi_h\bfphi,\pi_h\chi))
\\
&=A((\bfu- \bfu_h^l,p - p_h^l),(\bfphi,\chi) - (\pi_h\bfphi,\pi_h\chi)) 
\\ \nonumber
&\quad + L((\pi_h\bfphi,\pi_h\chi)) - L_h((\pi_h\bfphi,\pi_h\chi))
\\ \nonumber
&\quad
+ A_h((\bfu_h,p_h),(\pi_h\bfphi,\pi_h\chi))
- A((\bfu_h,p_h)^l,(\pi_h\bfphi,\pi_h\chi)^l)
\\
&=I_1 + I_2 + I_3
\end{align}
\paragraph{Term $\bfI_1$.} Using the continuity of $A$, the energy 
error estimate (\ref{eq:energyest}), and the interpolation error 
estimate (\ref{eq:interpol-energy}) we obtain
\begin{align}
|I| &\lesssim \tn (\bfu- \bfu_h^l,p - p_h^l) \tn \, \tn (\bfphi,\chi) - (\pi_h\bfphi,\pi_h\chi)^l \tn
\\
&\lesssim 
\Big(h^{k_u+1}\| \bfu \|_{H^{k_u+1}(\Gamma)} 
+ h^{k_p} \| p \|_{H^{k_p+1}(\Gamma)} \Big)
\Big( h \| \bfphi \|_{H^{1}(\Gamma)} 
+ h \| \chi \|_{H^{2}(\Gamma)} \Big)
\\
&\lesssim 
\Big(h^{k_u+2}\| \bfu \|_{H^{k_u+1}(\Gamma)} 
+ h^{k_p+1} \| p \|_{H^{k_p+1}(\Gamma)} \Big)
\|\psi\|_\Gamma
\end{align} 

\paragraph{Term $\bfI_2$.} Using the quadrature estimate 
(\ref{eq:quadestLh}) we directly obtain
\begin{align}
|II| &\lesssim  
h^{k_g+1} \Big( \| f \|_\Gamma + \|\bfg \|_\Gamma\Big)
\tn (\pi_h\bfphi,\pi_h\chi) \tn_h
\lesssim 
h^{k_g+1} \Big( \| f \|_\Gamma + \|\bfg \|_\Gamma\Big) \|\psi\|_\Gamma
\end{align}
where we used stability (\ref{eq:interpolstab}) of the interpolant and stability (\ref{eq:contdualstab}) of the solution to the dual problem.

\paragraph{Term $\bfI_3$.} Using the quadrature estimate 
(\ref{eq:quadestAh}) we obtain
\begin{align}
|III| &\lesssim 
h^{k_g+1} \Big(  \tn (\bfu_h,p_h) \tn_h + h^{-1} \| \bfn \cdot \bfu_h \|_\Gammah \Big)
\Big( \tn (\pi_h\bfphi,\pi_h\chi) \tn_h + h^{-1} \| \bfn \cdot \pi_h \bfphi \|_\Gammah \Big)
\\
&\lesssim 
h^{k_g+1}(\|f\|_\Gamma + \| \bfg \|_\Gamma ) 
( \tn (\bfphi,\chi ) \tn + \|\phi \|_{H^1(\Gamma)} ) 
 \\
&\lesssim 
h^{k_g+1}(\|f\|_\Gamma + \| \bfg \|_\Gamma ) 
\|\psi \|_\Gamma
\end{align}
where we used, the stability (\ref{eq:interpolstab}) of the interpolant,  
the estimate (\ref{eq:interpol-normal}) for the normal component of the 
interpolant, and the stability (\ref{eq:contdualstab}) of the dual problem.

\paragraph{Conclusion Term $\bfI$.} Finally, setting 
$\psi = p-(p_h^l - \lambda_\Gamma(p_h^l))/ \|p-(p_h^l - \lambda_\Gamma(p_h^l))\|_\Gamma$, and collecting the estimates of terms $I_1$, $I_2$, and $I_3$, we obtain
\begin{equation}\label{eq:L2pTerm1}
I \lesssim   
h^{k_u+2}\| \bfu \|_{H^{k_u+1}(\Gamma)} 
+ h^{k_p+1} \| p \|_{H^{k_p+1}(\Gamma)}
+ h^{k_g+1} (\|f\|_\Gamma + \| \bfg \|_\Gamma ) 
\end{equation}

\paragraph{Term $\bfI\bfI$.} Changing the domain of 
integration from $\Gamma$ to $\Gammah$, we obtain the 
identity
\begin{align}
\lambda_\Gamma(p_h^l) -  \lambda_\Gammah(p_h)
&= 
|\Gamma|^{-1} \int_\Gamma p_h^l - |\Gammah|^{-1} \int_\Gammah p_h
\\
&=\int_{\Gammah} ( |\Gamma|^{-1} |\bfB| - |\Gammah|^{-1} ) p_h
\end{align}
Using the estimates $|\bfB| = 1 + O(h^{k_g+1})$ 
and $|\Gammah| = |\Gamma| + O(h^{k_g+1})$, and some obvious 
manipulations we obtain
\begin{align}
|\lambda_\Gamma(p_h^l) -  \lambda_\Gammah(p_h)|
&\lesssim 
\| |\Gamma|^{-1} |\bfB| - |\Gammah|^{-1} \|_{L^\infty(\Gammah)} 
\| p_h \|_\Gammah
\\ 
&\qquad \lesssim h^{k_g+1} \| \nablash p_h \|_\Gammah 
\lesssim h^{k_g+1} (\|f\|_\Gamma + \|\bfg \|_\Gamma ) 
\end{align}
where at last we used the Poincar\'e estimate (\ref{PoincareGammah}). 
Thus we conclude that 
\begin{equation}\label{eq:L2pTerm2}
II \lesssim h^{k_g+1} (\|f\|_\Gamma + \|\bfg \|_\Gamma ) 
\end{equation}

\paragraph{Conclusion.} Together the estimates (\ref{eq:L2pTerm1}) 
and (\ref{eq:L2pTerm2}) of Terms $I$ and $II$ proves the desired estimate.
\end{proof}

\begin{thm}\label{thm:L2estu} Under the same assumptions as 
in Theorem \ref{thm:energyest} the following estimate holds
\begin{align}\nonumber
\|\bfPs( \bfu - \bfu_h^l) \|_{\Gamma} 
&\lesssim h^{k_u+1}\| \bfu \|_{H^{k_u+1}(\Gamma)} 
+ h^{k_p} \| p \|_{H^{k_p+1}(\Gamma)}
+ h^{k_g+1} (\|f\|_\Gamma + \|\bfg\|_\Gamma )
\end{align}
for all $h\in (0,h_0]$. 
\end{thm}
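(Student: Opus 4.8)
The plan is to estimate the tangential velocity error by an Aubin--Nitsche duality argument in the same spirit as the proof of Theorem~\ref{thm:L2estp}, but with a dual problem carrying \emph{vector} data. Since $\bfe := \bfPs(\bfu - \bfu_h^l)$ is tangential, I would start from $\|\bfe\|_\Gamma = \sup\{(\bfu - \bfu_h^l,\bfpsi)_\Gamma : \bfpsi \in \bfV_t,\ \|\bfpsi\|_\Gamma \le 1\}$ and introduce, for each such $\bfpsi$, the solution $(\bfphi,\chi)\in \bfV_t\times Q$ of the adjoint Darcy problem $-\divs\bfphi = 0$, $-\bfphi_t + \nablas\chi = \bfpsi$, characterised weakly by $A((\bfv,q),(\bfphi,\chi)) = (\bfv,\bfpsi)_\Gamma$ for all $(\bfv,q)$; in particular $\bfphi$ is tangential and divergence free. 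The crucial structural fact, and the reason the two error constants differ from Theorem~\ref{thm:L2estp}, is the regularity: here the vector datum $\bfpsi$ plays the role of $\bfg$ in (\ref{eq:ellreg}) while the scalar datum vanishes, so taking $s=-1$ only yields $\|\bfphi\|_\Gamma + \|\chi\|_{H^1(\Gamma)} \lesssim \|\bfpsi\|_\Gamma$ --- one order less than the $\|\bfphi\|_{H^1(\Gamma)} + \|\chi\|_{H^2(\Gamma)}$ bound exploited for the pressure. This already explains why one should expect the energy rates $h^{k_u+1}\|\bfu\|_{H^{k_u+1}(\Gamma)} + h^{k_p}\|p\|_{H^{k_p+1}(\Gamma)}$ in $\bfu$ and $p$ (no superconvergence), while the geometry term may still be sharpened to $h^{k_g+1}$.

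Next I would derive the error representation exactly as in Theorem~\ref{thm:L2estp}: insert $(\bfv,q) = (\bfu - \bfu_h^l, p - p_h^l)$, use Galerkin orthogonality (\ref{eq:fem}) together with the exact weak form (\ref{eq:mhweak}), and add and subtract the interpolant $(\pi_h\bfphi^e,\pi_h\chi^e)$. This splits $(\bfu - \bfu_h^l,\bfpsi)_\Gamma$ into an interpolation term $J_1 = A\big((\bfu - \bfu_h^l, p - p_h^l),(\bfphi - \pi_h\bfphi^e, \chi - \pi_h\chi^e)^l\big)$ and two geometric consistency terms, $L((\pi_h\bfphi,\pi_h\chi)^l) - L_h((\pi_h\bfphi,\pi_h\chi))$ and $A_h((\bfu_h,p_h),(\pi_h\bfphi,\pi_h\chi)) - A((\bfu_h^l,p_h^l),(\pi_h\bfphi,\pi_h\chi)^l)$. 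For the consistency terms the discrete factor $\tn(\bfu_h,p_h)\tn_h + h^{-1}\|\bfn\cdot\bfu_h\|_\Gammah$ is controlled by the stability estimate (\ref{eq:staba}), and the tangential dual factor $\tn(\pi_h\bfphi,\pi_h\chi)\tn_h$ by the $L^2/H^1$ stability (\ref{eq:interpolstab}) of $\pi_h$ and the dual regularity; the first is then handled by (\ref{eq:quadestLh}). The second, however, cannot be disposed of with the crude corollary (\ref{eq:quadestAh}), because the factor $h^{-1}\|\bfn\cdot\pi_h\bfphi\|_\Gammah$ it produces would need $\|\bfn\cdot\pi_h\bfphi\|_\Gammah\lesssim h$, i.e.\ $\bfphi\in H^1$ via (\ref{eq:interpol-normal}), which is not available. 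I would therefore expand it into the four elementary pieces and bound each with the \emph{finer} quadrature estimates (\ref{eq:quada})--(\ref{eq:quadc}), isolating the single offending piece $(\nablash p_h,\pi_h\bfphi)_\Gammah$ governed by (\ref{eq:quadb}).

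The hard part is precisely these geometry-sensitive pieces --- the offending $(\nablash p_h,\pi_h\bfphi)$ contribution and, above all, the interpolation term $J_1$. In the pressure proof the analogous $J_1$ gained a full power of $h$ from the $H^1\times H^2$ dual interpolation error; here the dual is one order rougher, so a plain Cauchy--Schwarz in $A$ returns only the energy error times an $O(\|\bfpsi\|_\Gamma)$ factor, reproducing the energy bound \emph{including} the unwanted $h^{k_g}(\|f\|_\Gamma+\|\bfg\|_\Gamma)$. This is acceptable for the $\bfu,p$ contributions (it gives exactly the claimed $h^{k_u+1}$ and $h^{k_p}$ rates) but not for the geometry term. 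The way through, which I expect to be the delicate step, is to avoid Cauchy--Schwarz on the geometry-sensitive parts: exploiting that $\bfphi$ is tangential with $\divs\bfphi=0$, I would integrate by parts in the mixed pairings $(\nablas(p-p_h^l),\cdot)$ so as to transfer the surface gradient off the pressure error and onto the more regular scalar $\chi$, re-expressing the offending contributions through the \emph{pressure} error $p-p_h^l$ measured in $L^2$. At that point the superconvergent bound of Theorem~\ref{thm:L2estp} supplies the missing power of $h$, upgrading $h^{k_g}$ to $h^{k_g+1}$, while any residual normal component $\bfn\cdot\bfu_h$ is again absorbed by (\ref{eq:staba}). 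Collecting $J_1$ and the two consistency terms and taking the supremum over $\bfpsi$ then yields the stated estimate.
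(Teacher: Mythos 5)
Your diagnosis of the obstruction is exactly right --- with vector data $\bfpsi \in L^2$ the continuous dual pair only satisfies $\|\bfphi\|_\Gamma + \|\chi\|_{H^1(\Gamma)} \lesssim \|\bfpsi\|_\Gamma$, so neither the dual interpolation error nor the factor $h^{-1}\|\bfn\cdot\pi_h\bfphi\|_\Gammah$ appearing in (\ref{eq:quadestAh}) and (\ref{eq:quadb}) can be controlled --- but the repair you propose does not close the gap. First, the velocity--velocity pairing $(\bfu-\bfu_h^l,\,\bfphi-(\pi_h\bfphi^e)^l)_\Gamma$ inside your $J_1$ contains no pressure gradient, so there is nothing to integrate by parts; with $\bfphi$ merely in $L^2$, Cauchy--Schwarz returns the full energy error of Theorem \ref{thm:energyest}, including the very term $h^{k_g}(\|f\|_\Gamma+\|\bfg\|_\Gamma)$ you are trying to remove. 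Second, wherever you can move the surface gradient off the pressure error, it necessarily lands on a discrete object ($\pi_h\bfphi$ or $p_h$), and the inverse inequality $\|\divsh \pi_h\bfphi\|_\Gammah \lesssim h^{-1}\|\pi_h\bfphi\|_\Gammah \lesssim h^{-1}\|\bfpsi\|_\Gamma$ costs exactly the power of $h$ that Theorem \ref{thm:L2estp} supplies: $h^{-1}\cdot h^{k_g+1} = h^{k_g}$. The same happens in the offending quadrature piece: in the proof of (\ref{eq:quadb}) the critical contribution is $\big((\bfPsh\bfn)(\bfn\cdot\pi_h\bfphi),\nablash p_h\big)_\Gammah$, which without $H^1$ control of $\bfphi$ is only $O(h^{k_g})\,\|\bfpsi\|_\Gamma(\|f\|_\Gamma+\|\bfg\|_\Gamma)$. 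So in your argument the geometry term remains stuck at $h^{k_g}$, and the claimed upgrade to $h^{k_g+1}$ is not achieved.

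The paper's proof avoids the problem by dualizing at the discrete level: $(\bfphi_h,\chi_h)\in\bfV_h\times Q_h$ solves $A_h((\bfv,q),(\bfphi_h,\chi_h))=(\bfpsi^e,\bfv)_\Gammah$, see (\ref{eq:discretedual}). Because the dual solution is already a finite element function, no interpolation of it is ever needed (your $J_1$ simply does not appear) and no elliptic regularity is invoked; its role is played by the discrete stability estimate (\ref{eq:discretedualstability}),
\begin{equation}
\tn (\bfphi_h,\chi_h)\tn_h + h^{-1}\|\bfn\cdot\bfphi_h\|_\Gammah \lesssim \|\bfpsi\|_\Gamma ,
\end{equation}
which is proved by repeating the super--approximation argument of Lemma \ref{lem:stab} (testing with $\pi_h(\bfn\,\pi_h(\bfn\cdot\bfphi_h))$). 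With that bound, the error representation splits into the primal interpolation error tested against $(\bfphi_h,\chi_h)^l$, which gives $h^{k_u+1}\|\bfu\|_{H^{k_u+1}(\Gamma)}+h^{k_p}\|p\|_{H^{k_p+1}(\Gamma)}$ by continuity of $A$ (the primal solution is smooth, so no dual regularity is needed), and the two quadrature terms, where (\ref{eq:quadestAh}) and (\ref{eq:quadestLh}) now yield the full $h^{k_g+1}$ because both normal-component factors are bounded: $h^{-1}\|\bfn\cdot\pi_h\bfu\|_\Gammah$ by (\ref{eq:interpol-normal}), since $\bfu$ is tangential and in $H^1$, and $h^{-1}\|\bfn\cdot\bfphi_h\|_\Gammah$ by (\ref{eq:discretedualstability}). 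That discrete-duality device, rather than additional partial integration, is the missing idea in your proposal.
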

\begin{proof} Let $(\bfphi_h,\chi_h)$ be the solution to the 
discrete dual problem
\begin{equation}\label{eq:discretedual}
A_h((\bfv,q), (\bfphi_h,\chi_h) ) = (\bfpsi^e,\bfv)_\Gammah
\end{equation}
where $\bfpsi:\Gamma \rightarrow \IR^3$ is a given tangential vector 
field. We note that there is a unique solution to (\ref{eq:discretedual}) , 
and using the technique in the proof of Lemma 
\ref{lem:stab}, we conclude that the following stability estimate holds
\begin{equation}\label{eq:discretedualstability}
\tn (\bfphi_h,\chi_h)\tn_h +h^{-1} \| \bfn \cdot \bfphi_h \|_\Gammah 
\lesssim 
\| \bfpsi \|_\Gamma
\end{equation}

Setting $(\bfv,q) = ( \pi_h \bfu - \bfu_h, \pi_h p - p_h) $ 
in (\ref{eq:discretedual})  we obtain the error representation formula 
\begin{align}
(\pi_h \bfu - \bfu_h,\bfpsi^e)_\Gammah 
&=  
A_h((\pi_h \bfu - \bfu_h, \pi_h p - p_h), (\bfphi_h,\chi_h) ) 
\\
&=
A_h((\pi_h \bfu , \pi_h p ), (\bfphi_h,\chi_h) ) - L_h( (\bfphi_h,\chi_h) )
\\
&=
A_h((\pi_h \bfu , \pi_h p ), (\bfphi_h,\chi_h) ) 
\\ \nonumber
&\qquad 
\underbrace{- A(( \bfu ,  p ), (\bfphi_h,\chi_h)^l ) 
+
L( (\bfphi_h,\chi_h)^l )}_{=0}
- 
L_h( (\bfphi_h,\chi_h) )
\\
&=
{A(( \pi_h \bfu , \pi_h p  )^l - (\bfu,p), (\bfphi_h,\chi_h)^l )}
\\ \nonumber
&\qquad 
+A_h((\pi_h \bfu , \pi_h p ), (\bfphi_h,\chi_h) ) 
- A(( \pi_h \bfu , \pi_h p )^l, (\bfphi_h,\chi_h)^l )
\\
&\qquad
+L( (\bfphi_h,\chi_h)^l )
- 
L_h( (\bfphi_h,\chi_h) )
\\
&=I+II+III
\end{align}
The terms may be estimated as follows.

\paragraph{Term $\bfI$.} We have
\begin{align}
|I|
&\lesssim 
\tn (\bfu - (\pi_h \bfu)^l, p - (\pi_h p)^l)\tn \, \tn (\bfphi_h,\chi_h)\tn_h
\\
&\lesssim 
\Big( h^{k_u+1} \| \bfu \|_{H^{k_u+1}(\Gamma)}   
+h^{k_p} \| p \|_{H^{k_p+1}(\Gamma)} \Big) \|\bfpsi\|_{\Gamma}  
\end{align}
where we used continuity of $A_h$, the interpolation estimate (\ref{eq:interpol}), 
and the stability estimate (\ref{eq:discretedualstability}).

\paragraph{Term $\bfI\bfI$.} We have
\begin{align}
|II| 
&\lesssim 
h^{k_g + 1} 
\Big(\tn (\pi_h \bfu , \pi_h p ) \tn_h  
+h^{-1} \| \bfn \cdot \pi_h \bfu \|_\Gammah \Big)
\Big( \tn (\bfphi_h,\chi_h) \tn_h +h^{-1} \| \bfn \cdot \bfphi_h \|_\Gammah \Big)
\\
&\lesssim 
h^{k_g + 1} (\|\bfu \|_{H^1(\Gamma)} + \| p \|_{H^1(\Gamma}) 
\| \bfpsi \|_\Gamma
\end{align}
where we used the quadrature estimate (\ref{eq:quadestAh}), the stability of the interpolation operator, and the stability estimate (\ref{eq:discretedualstability}).

\paragraph{Term $\bfI\bfI\bfI$.} We have
\begin{align}
|III|
&\lesssim 
h^{k_g + 1} (\|f\|_\Gamma + \| \bfg \|_\Gamma ) \tn (\bfphi_h,\chi_h) \|_h
\\
&\lesssim 
h^{k_g + 1} (\|f\|_\Gamma + \| \bfg \|_\Gamma ) \|\bfpsi\|_\Gamma
\end{align}
where we used the quadrature estimate (\ref{eq:quadestLh}) and the stability estimate (\ref{eq:discretedualstability}).

\paragraph{Conclusion.} Finally, setting $\bfpsi = \bfPs \bfe_{h,u}/  \| \bfPs \bfe_{h,u}\|_\Gamma$ we have 
\begin{equation}
(\bfe_{h,u},\bfpsi)_\Gammah = \|\bfPs \bfe_{h,u}\|_\Gammah
\sim  \|\bfPs (\bfu - \bfu_h^l ) \|_\Gammah
\end{equation}
where we used equivalence of norms (\ref{eq:normequ}). Thus the proof is complete.
\end{proof}

\section{Numerical Example: Flow on a Torus}

Let $\Gamma$ be the  torus, given implicitly by the solution to
\[
 (R-\sqrt{x^2 + y^2})^2 + z^2 -r^2 = 0
\]
were $R=1$ is the major and $r=1/2$ is the minor radius and 
let the right-hand side $\bfg$ correspond to the solution
\[
\bfu_t  = \left(2 x z ,-2 y z,2(x^2 - y^2)(R-\sqrt{x^2 + y^2})/\sqrt{x^2 + y^2}\right),\quad u_n=0,\quad p= z .
\]
Note that $\divs\bfu = 0$, so $f=0$. 
The errors are computed on the discrete geometry by defining
$e_p:=\| p^e-p_h\|_{\Gamma_h}$ and $e_{u}:=\| \bfu^e - \bfu_h\|_{\Gamma_h}$.
For the evaluation of the integral $(\bfg_h,\bfv)_{\Gamma_h}$, we use $\bfg_h = \bfg^e$. We have used $c_N=0$, see Remark \ref{remark-cn}. We emphasize that $c_N> 0$ does not affect the asymptotic convergence rate; however, for large $c_N$ a locking effect can occur. Moderate sizes of $c_N$ have a negligible 
effect on the error.

In Fig. \ref{fig:torus} we show the computed velocity field on a particular mesh, with computed pressure isolevels on
the same mesh in Fig. \ref{fig:press}.

In order to make a comparison between structured and unstructured meshes, we create a sequence of unstructured meshes by randomly moving the nodes on each mesh in a sequence of structured meshes. A typical example is shown in Fig. \ref{fig:unstruct}.
We then make eight comparisons:
\begin{table}
\centering
\begin{tabular}{|c c c c c |c c|}
\hline
Case & $k_u$ & $k_p$ & $k_g$ & Mesh & Order $e_u$ & Order $e_p$ 
\\
\hline
\hline
1     & 1 & 1 & 1 & Structured & 2 & 2
\\
\hline
2     & 1 & 2 & 1 & Structured & 2 & 2
\\
\hline
3     & 1 & 1 & 1 & Unstructured & 1 & 2
\\
\hline
4     & 1 & 2 & 1 & Unstructured & 2 & 2
\\
\hline
5     & 1 & 1 & 2 & Structured & 2 & 2
\\
\hline
6     & 1 & 2 & 2 & Structured & 2 & 3 
\\
\hline
7     & 1 & 1 & 2 & Unstructured & 1 & 2
\\
\hline
8     & 1 & 2 & 2 & Unstructured & 2 & 3 
\\
\hline
\end{tabular}
\caption{The eight  cases considered in the numerical examples together with the observed orders of convergence.}
\end{table}

We note the following: for Case 1 (Fig. \ref{fig:linearstruct}), the geometry error does not affect the solution and we get optimal $O(h^2)$-convergence in pressure and superconvergence $O(h^2)$--convergence of velocities, related to the structuredness of the meshes. In Case 2 (Fig. \ref{fig:linearstruct}), the increase in polynomial degree for the pressure does not help because of the poor geometry approximation and we keep $O(h^2)$--convergence.

For Case 3 (Fig. \ref{fig:linearunstruct}) we lose the superconvergence in velocity, which becomes $O(h)$; for Case 4 (Fig. \ref{fig:linearunstruct}) we regain optimal convergence for the velocity of $O(h^2)$ but, as for Case 2, the geometry approximation
precludes optimal convergence of the pressure which remains $O(h^2)$.

For Case 5 (Fig. \ref{fig:quadstruct})
we have optimal convergence in pressure of $O(h^2)$ and superconvergence of velocity of $O(h^2)$. Improving the pressure
approximation as in Case 6 (Fig. \ref{fig:quadstruct}) now leads to the expected convergence of $O(h^2)$ for velocity and $O(h^3)$ for pressure. 

On unstructured meshes, Case 7 (Fig. \ref{fig:quadunstruct}) we lose the superconvergence of velocities and obtain only $O(h)$ in velocity error with linear pressures, while Case 8  (Fig. \ref{fig:quadunstruct}) again gives the expected error of $O(h^2)$ for velocity and $O(h^3)$ for pressure.

We conclude that:
 \begin{itemize}
\item Piecewise linear approximations for $\bfu_h$ and $p_h$ have superconvergence of velocity on \emph{structured}\/ meshes, both for piecewise linear and piecewise quadratic geometry.
\item Increasing the polynomial degree of the pressure to $P^2$ increases the convergence of a piecewise linear velocity from $O(h)$ to $O(h^2)$ on \emph{unstructured}\/ meshes, both for piecewise linear and piecewise quadratic geometry.
\item Increasing the convergence rate of the pressure, from $O(h^2)$ to $O(h^3)$, when going from $P^1$ to $P^2$ approximations requires
that the same increase is being made in the geometry approximation.
 \end{itemize}
Comments:
\begin{itemize}
\item For unstructured meshes these results are in accordance with the theoretical investigations in Section 5. We note, however, that in the numerical results the normal component of the error in the velocity 
also converges optimally, i.e. of order $k_g+1$, with respect to the order of approximation of the geometry while in Theorem \ref{thm:energyest} 
we achieve order $k_g$. In Theorem \ref{thm:L2estu} we, however, 
show that the tangent component of the error is indeed optimal with respect to the order of the approximation of the geometry. Thus our 
theoretical results are in line with the numerical results but slightly weaker with respect to the order of approximation of the geometry 
for the normal component of the error. The error in the pressure and 
the tangent component are optimal with respect to $k_p$, $k_u$, 
and $k_g$.

\item For structured meshes the superconvergence most certainly is
related to superconvergence of $L^2$ projections of the gradient on 
the continuous space, see \cite{BaXu03}, which holds on structured 
meshes.
\end{itemize}

\bibliographystyle{plain}
  \bibliography{ref}
  
\newpage
\begin{figure}
\begin{center} 
\includegraphics[width=12cm]{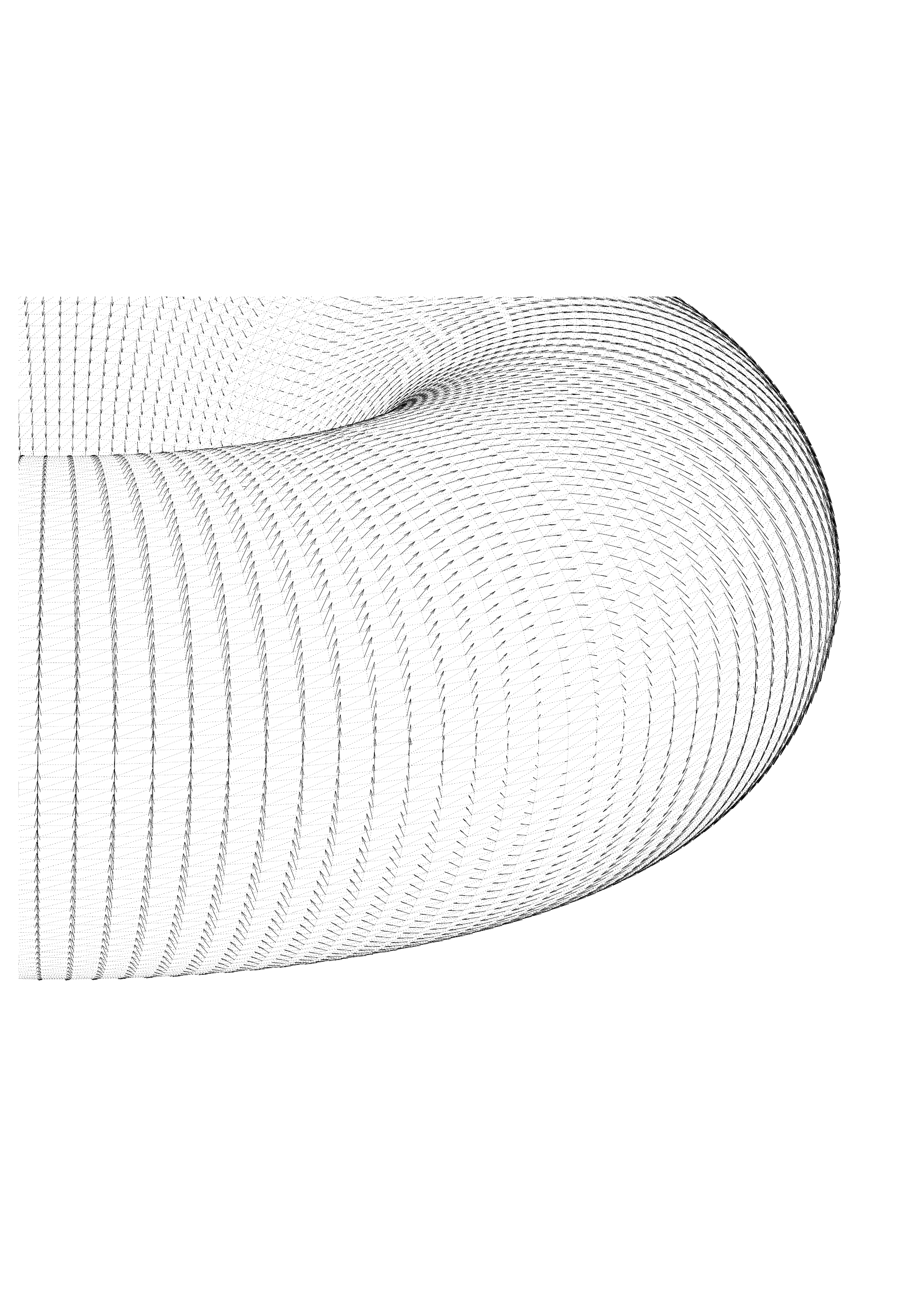}
\caption{A detail of the computed velocity on a structured mesh. \label{fig:torus}}
\end{center}
\end{figure}
\begin{figure}
\begin{center} 
\includegraphics[width=15cm]{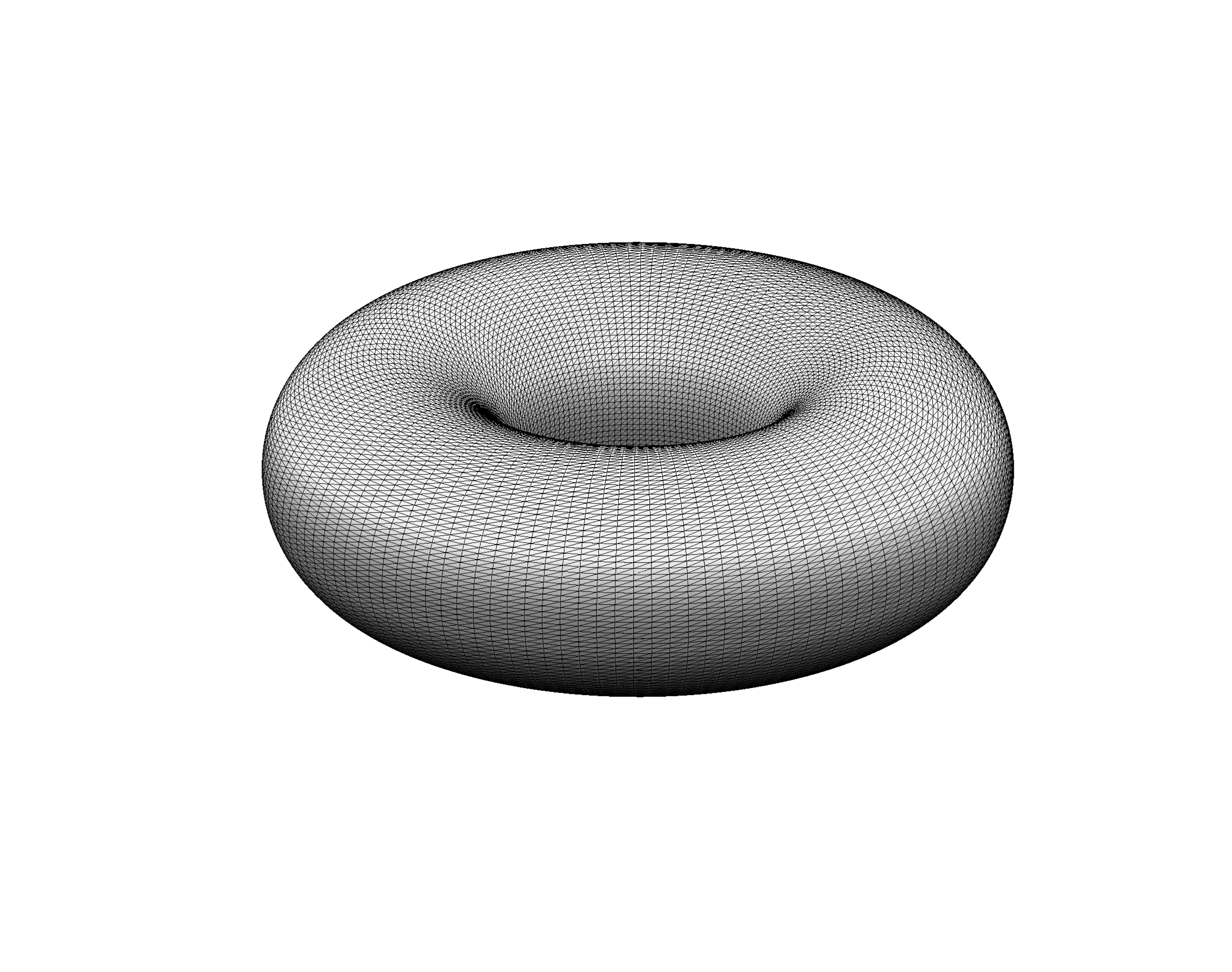}
\caption{Computed pressure on a structured mesh. \label{fig:press}}
\end{center}
\end{figure}
\begin{figure}
\begin{center} 
\includegraphics[width=15cm]{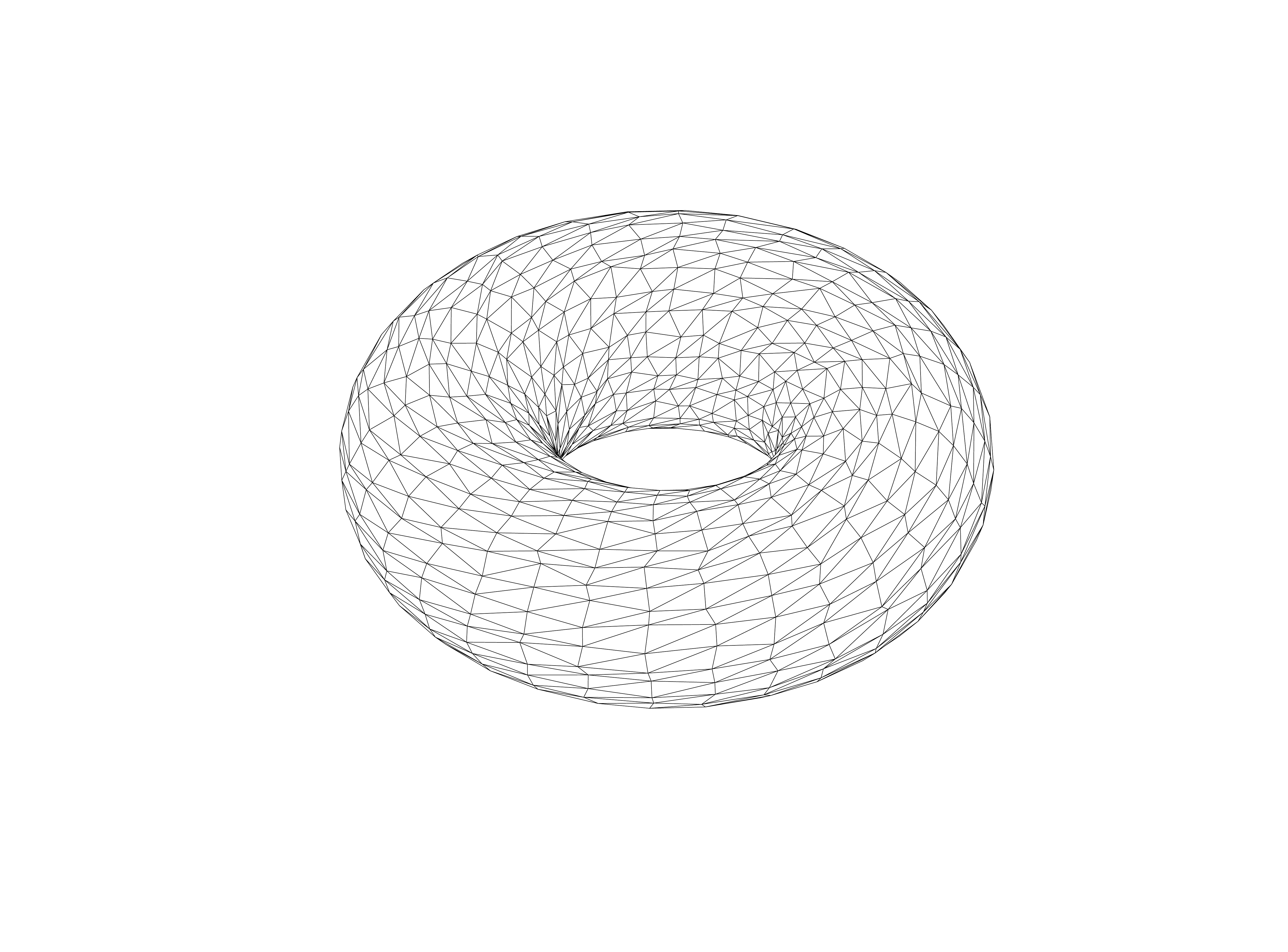}
\caption{Unstructured mesh acquired by jiggling the nodes of a structured mesh. \label{fig:unstruct}}
\end{center}
\end{figure}
\begin{figure}
\begin{center} 
\includegraphics[width=15cm]{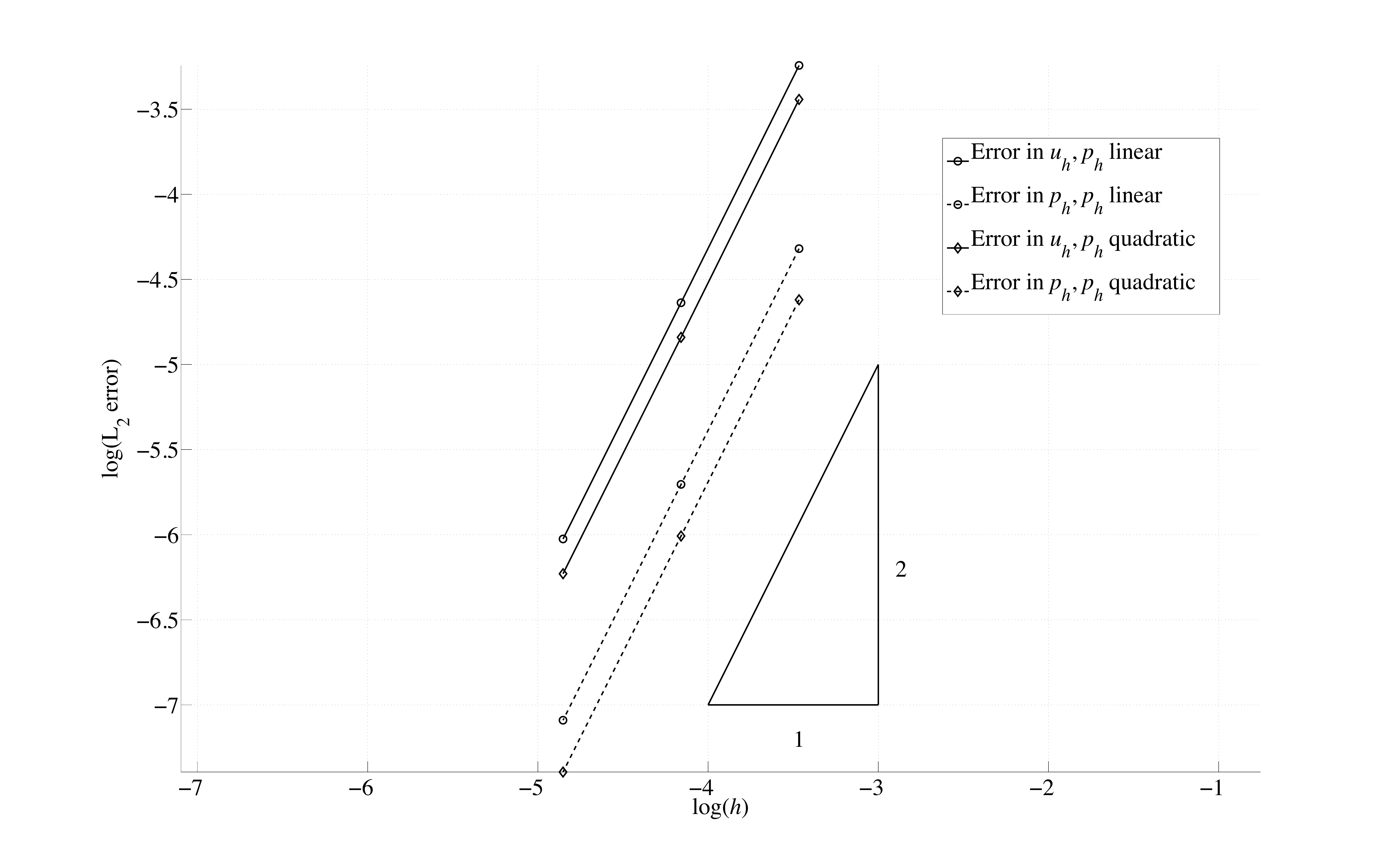}
\caption{Computed errors on a piecewise linear, structured mesh. \label{fig:linearstruct}}
\end{center}
\end{figure}
\begin{figure}
\begin{center} 
\includegraphics[width=15cm]{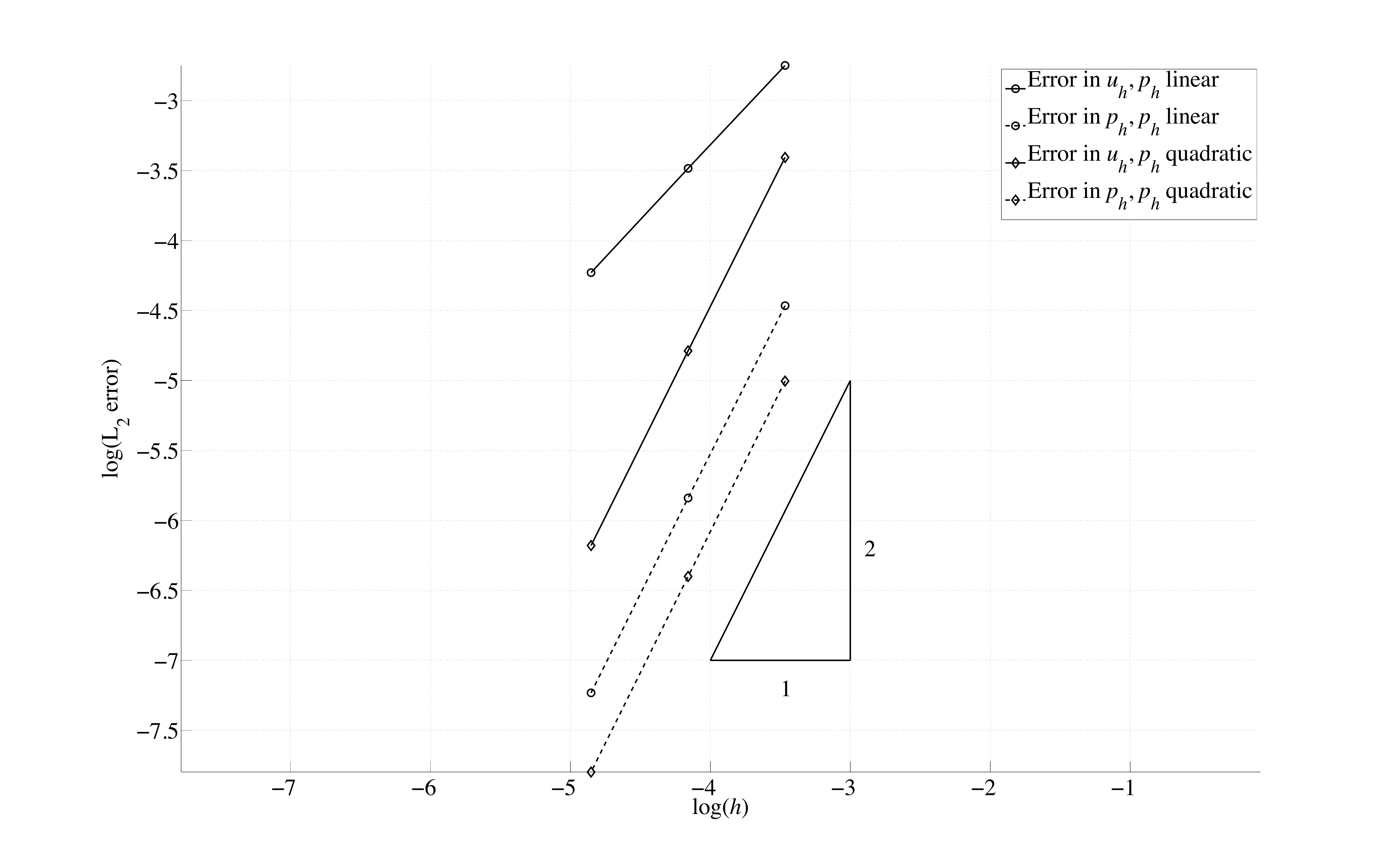}
\caption{Computed errors on a piecewise linear, unstructured mesh. \label{fig:linearunstruct}}
\end{center}
\end{figure}
\begin{figure}
\begin{center} 
\includegraphics[width=15cm]{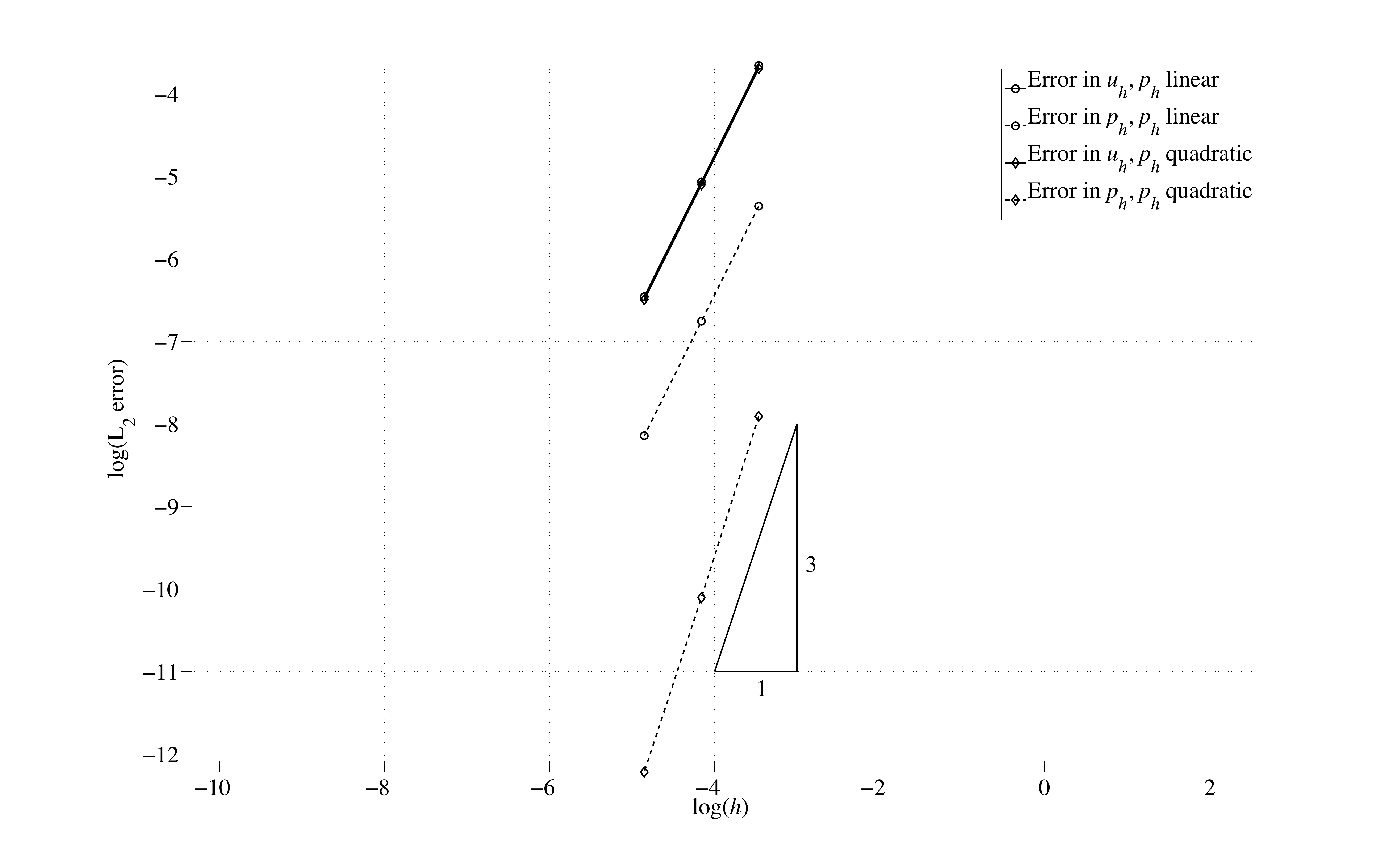}
\caption{Computed errors on a piecewise quadratic, structured mesh. \label{fig:quadstruct}}
\end{center}
\end{figure}
\begin{figure}
\begin{center} 
\includegraphics[width=15cm]{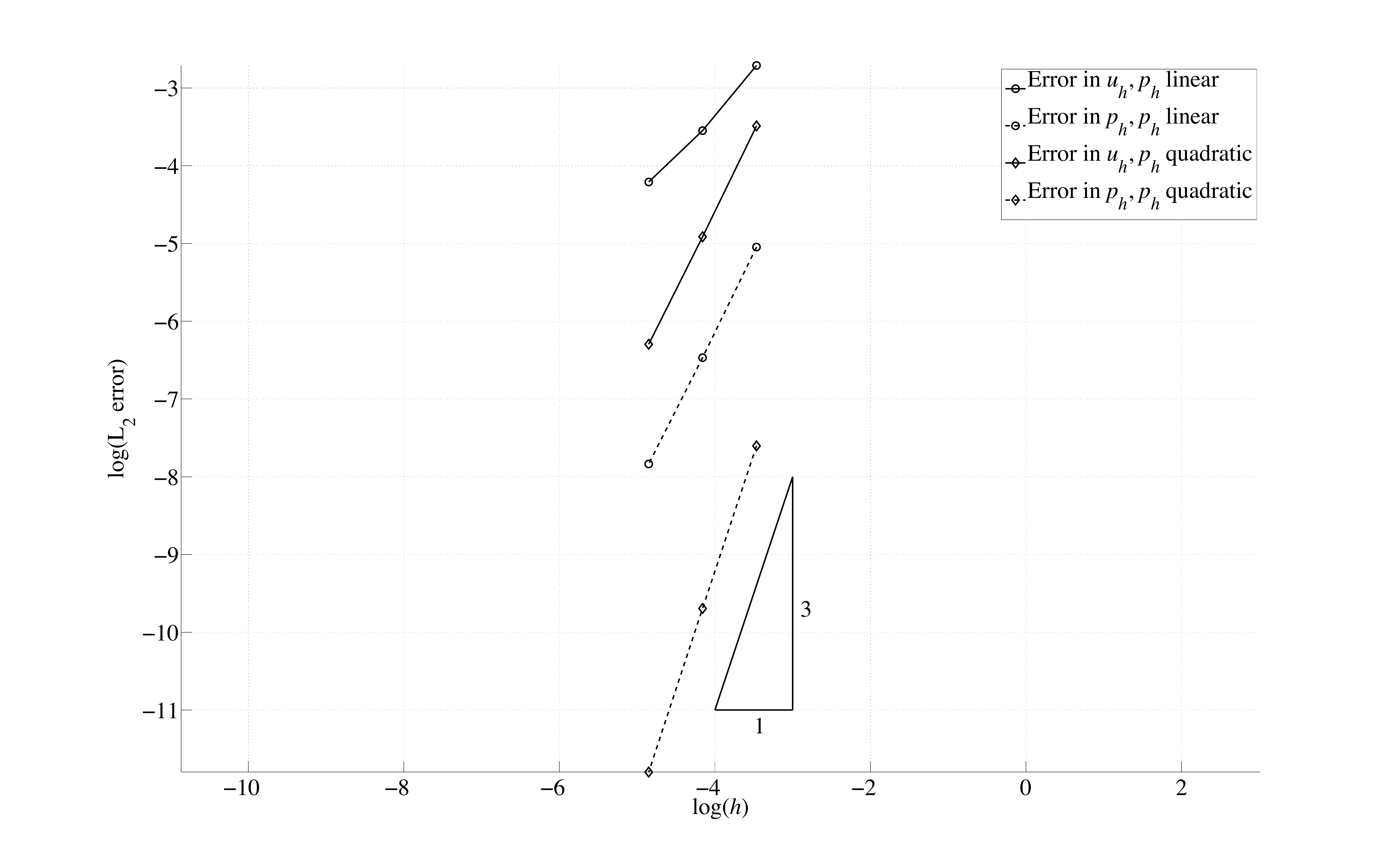}
\caption{Computed errors on a piecewise quadratic, unstructured mesh. \label{fig:quadunstruct}}
\end{center}
\end{figure}

\end{document}